\definecolor{black}{rgb}{0.0, 0.0, 0.0}
\definecolor{red}{rgb}{1.0, 0.5, 0.5}
\newcommand{\margnote}[1]{
\ifthenelse{\boolean{shownotes}}%
{\marginpar{\raggedright\tiny\texttt{#1}}}%
{}%
}
\newcommand{\hole}[1]{
\ifthenelse{\boolean{shownotes}}%
{\begin{center} \fbox{ \rule {.25cm}{0cm} \rule[-.1cm]{0cm}{.4cm}
\parbox{.85\textwidth}{\begin{center} \texttt{#1}\end{center}} \rule
{.25cm}{0cm}}\end{center}} {} }
\title[BGK model for isentropic gas dynamics]{Classical solutions to a BGK-type model relaxing to the isentropic gas dynamics}
\author[Hwang]{Byung-Hoon Hwang}
\address[Byung-Hoon Hwang]{\newline Department of Mathematics Education\newline
Sangmyung University, 20 Hongjimun 2-gil, Jongno-Gu, Seoul 03016, Republic of Korea}
\email{bhhwang@smu.ac.kr}
\numberwithin{equation}{section}
\newtheorem{theorem}{Theorem}[section]
\newtheorem{lemma}{Lemma}[section]
\newtheorem{proposition}{Proposition}[section]
\newcommand{\R}{\mathbb R}
\newcommand{\T}{\mathbb T}
\newcommand{\bq}{\begin{equation}}
\newcommand{\eq}{\end{equation}}
\newcommand{\e}{\varepsilon}
\newcommand{\lt}{\left}
\newcommand{\rt}{\right}
\newcommand{\lal}{\langle}
\newcommand{\ral}{\rangle}
\newcommand{\pa}{\partial}
\newcommand{\intr}{\int_{\R^d}}
\newcommand{\inttr}{\iint_{\T^d \times \R^d}}
\def\moverlay{\mathpalette\mov@rlay}
\def\mov@rlay#1#2{\leavevmode\vtop{%
   \baselineskip\z@skip \lineskiplimit-\maxdimen
   \ialign{\hfil$\m@th#1##$\hfil\cr#2\crcr}}}
\newcommand{\charfusion}[3][\mathord]{
    #1{\ifx#1\mathop\vphantom{#2}\fi
        \mathpalette\mov@rlay{#2\cr#3}
      }
    \ifx#1\mathop\expandafter\displaylimits\fi}
\begin{document}
\allowdisplaybreaks

\date{\today}

\subjclass[2020]{35B40, 35Q35, 82C40}

\keywords{BGK model, isentropic gas dynamics, large-time behavior, nonlinear energy method.}

\begin{abstract} 
In this paper, we consider a BGK-type kinetic model relaxing to the isentropic gas dynamics in the hydrodynamic limit. We introduce a linearization of the equation around the global equilibrium. Then we prove the global existence of classical solutions with an exponential convergence rate toward the equilibrium state in the periodic domain when the initial data is a small perturbation of the global equilibrium.



\end{abstract}

\maketitle \centerline{\date}


%
%
%
%
\section{Introduction}
 

\subsection{Model equation}
The BGK model \cite{BGK54,W54} is a relaxation-time approximation of the Boltzmann equation, designed based on the relaxation process toward equilibrium. This model has been used extensively in physics and engineering since it not only yields  satisfactory simulation results but also provides a suitable framework for kinetic modeling. For instance, it has been generalized in various ways to model the dynamics of classical particles \cite{Holway66,Shakhov68}, relativistic particles \cite{AW74,Marle65,Marle69,PR18}, quantum particles \cite{Khalatnikov65}, and gas mixtures \cite{AAP02,BKPY21,BBGSP18,BPS12,HLY23,KPP17}. We also refer to  \cite{LPT94_1, PT91} where the kinetic formulation using the BGK model was applied to derive the kinetic model for scalar conservation laws and related equations. In this paper, we consider a variant of the BGK model introduced in \cite{B99,LPT94_2}, which relaxes to the isentropic gas dynamics in the hydrodynamic limits
\begin{equation}\label{BGK0}
\pa_t F+ v\cdot\nabla_x F  = M[F] - F.
\end{equation}
Here $F\equiv F(x,v,t)$ is the one-particle distribution function at the phase point $(x,v)\in \T^d\times  \R^d$ and time $t\in\R_+$, and $M$ denotes an equilibrium distribution function for isentropic gas dynamics \cite{B99}, given by
\begin{equation}\label{Maxwellian}
M [F]\equiv M(\rho_F,u_F;v)= 
\displaystyle c\left(\frac{2\gamma}{\gamma-1}\rho_F^{\gamma-1}-|v-u_F|^2\right)^{n/2}_+ \qquad \text{for} \quad \displaystyle \gamma \in \lt(1,1+\frac{2}d\rt)
\end{equation}
where $(\cdot)_+$ stands for the positive part of a function  and $\mathbf{1}_A$ the indicator function on
the set $A$. In \eqref{Maxwellian}, $\rho_F$ and $u_F$ denote the macroopic density and bulk velocity of $F$ defined by
\begin{align*}
\rho_F(x,t)&=\int_{\mathbb{R}^d} F(x,v,t)\,dv,\qquad\rho_F(x,t)u_F(x,t)=\int_{\mathbb{R}^d} vF(x,v,t)\,dv
\end{align*}
respectively, and $c_d,n,$ and $c$ are constants given by
\begin{align*}
n=\frac{2}{\gamma-1}-d,
\quad c_d=\frac{d}{|\mathbb{S}_{d}|},\quad \mbox{and} \quad c=\left(\frac{2\gamma}{\gamma-1}\right)^{-\frac{1}{\gamma-1}}\frac{\Gamma\left(\frac{\gamma}{\gamma-1}\right)}{\pi^{\frac d2}\Gamma(\frac n2+1)},
\end{align*}
where $\Gamma$ is the Gamma function. By a direct calculation (see \cite[Appendix]{CH24-weak}), we see that the equilibrium distribution \eqref{Maxwellian} satisfies 
\bq\label{moment_comp}
	\int_{\mathbb{R}^d}(1,v,|v|^2)  M[F]\,dv =
	\left(\rho_F,\rho_Fu_F, 
	\rho_F |u_F|^2 +d \rho_F^\gamma \right)
\eq
The first two relations of \eqref{moment_comp} lead to the local conservation laws of mass and momentum for \eqref{BGK0}:
\begin{align*}\begin{split}
&\pa_t \rho_F+\nabla_x\cdot (\rho_F u_F)=0,\cr 
 &\pa_t (\rho_F u_F)+\nabla_x \cdot (\rho_F u_F\otimes u_F)+\nabla_x\cdot \intr (u_F-v)\otimes (u_F-v)F\,dv=0,
\end{split}\end{align*}
yielding
\begin{equation}\label{conservation laws}
\frac{d}{dt}\inttr F(x,v,t)\,dxdv=0,\qquad \frac{d}{dt}\inttr vF(x,v,t)\,dxdv=0.
\end{equation}
Note that the third relation gives
\[
\nabla_x\cdot\int_{\R^d} ( u_{F} - v)\otimes ( u_{F} - v) M[f]\,dv = C_d \nabla_x \rho^\gamma
\]
which enables us to derive the system of isentropic gas dynamics: 
\begin{align*}
\begin{aligned}
&\pa_t \rho  + \nabla_x \cdot (\rho   u ) = 0,\cr
&\pa_t (\rho   u ) + \nabla_x \cdot (\rho   u  \otimes  u ) + C_d\nabla_x \rho^\gamma  =0
\end{aligned}
\end{align*}
at the formal level, see \cite{CH24-weak}. The kinetic entropy associated with \eqref{BGK0} is given by
\begin{equation*}
H(F,v)= \begin{cases}
\displaystyle \frac{|v|^2}{2}F+\frac{1}{2c^{2/n}}\frac{F^{1+2/n}}{1+2/n} \qquad &\text{for} \quad \displaystyle \gamma \in \lt(1,1+\frac{2}d\rt),\\[4mm]
\displaystyle \frac{|v|^2}{2}F\qquad &\text{for}\quad \displaystyle \gamma=1+\frac{2}{d},
\end{cases}
\end{equation*}
and the following  minimization principle holds: 
$$
\int_{\mathbb{R}^d} H(M[F],v)\,dv \le \int_{\mathbb{R}^d} H(F,v)\,dv,
$$
provided $F+H(F,v)$ belongs to $L^1(\R^d_v)$, see \cite[Section 2.1]{B99} for details.

The BGK-type model \eqref{BGK0} has been widely studied to understand the isentropic gas dynamics at the kinetic level.  The flux vector splitting scheme for general systems of conservation laws was established in \cite{B03}. In the mono-dimensional case, existence and stability of small amplitude travelling wave
solutions were studied in \cite{CHS10}. The weak solutions was established in \cite{BB00} and its hydrodynamic limit to the isentropic gas dynamics was addressed in \cite{BB02,BB02_3}. For the multi-dimensional case, we refer to \cite{CH24-weak} for the existence of weak solutions and \cite{BV05} for the hydrodynamic limit. Also, it was shown in \cite{CH24-hydro} that the isentropic Euler-alignment system can be rigorously derived from \eqref{BGK0} in the presence of the velocity alignment force. Despite the variety of studies above, the large-time behavior of classical solutions has not been discussed yet, whereas for other BGK models it has been intensively studied in the literature, see \cite{BY20,BY23,HRY22,LP19,Yun12,Yun15}. Motivated by this, in this paper, we study the stability of solutions to \eqref{BGK0} around the global equilibrium.

\subsection{Main result}
In this paper, we are concerned with the equation \eqref{BGK0}: 
$$
\pa_t F+ v\cdot\nabla_x F  =M[F]-F
$$
subject to the initial data $ F(x,v,0)= F_0(x,v)$. We define the global equilibrium distribution as
 \begin{equation}\label{global}
 M_0:= M(1,0;v)=c\left(\frac{2\gamma}{\gamma-1}-|v|^2\right)^{n/2}_+,
 \end{equation}
and decompose $F$ into the equilibrium part and the perturbation part:
\begin{equation}\label{decomposition}
 F=M_0+M_0^{\frac{n-2}{2n}} f,\quad\mbox{with}\quad F_0(x,v)=M_0+M_0^{\frac{n-2}{2n}} f_0(x,v).
\end{equation}
Note that here $M_0$ is compactly supported in $v$, so in this regime the velocity support of $F$ is supposed to be entirely included in that of $M_0$, say $v\in \Omega$ (see \cite{CHS10} for the same argument).  Also, we assume $\gamma$ to be strictly less than $1+2/(d+2)$ to make $n-2>0$, which guarantees that the perturbation part is well-defined. Substituting the above decomposition into \eqref{BGK0} and applying the Taylor theorem, one can obtain the linearized equation: 
\begin{align}\label{perturbation}\begin{split}
\pa_t f + v\cdot\nabla_x f  &= L(f)+\Gamma(f)\cr 
f(x,v,0)&=f_0(x,v)
\end{split}\end{align}
 Here $L$ is the linearized operator and $\Gamma$ the nonlinear operator whose  definitions can be found in  \eqref{linearized Cauchy}.  

 The aim of this paper is to establish a global-in-time existence and large-time behavior of solutions to \eqref{perturbation}. For this,  we introduce our energy functional $E$ defined as
\begin{align*}
E(f)(t)&=\sum_{|\alpha|\le N}\| \partial^\alpha f\|^{2}_{L^2_{x,v}}
\end{align*}
where $\alpha$ is the multi-index:
$$
\alpha=[\alpha_0,\alpha_1,\cdots,\alpha_d],\quad\mbox{so that}\quad \pa^\alpha=\pa_t^{\alpha_0}\pa_{x_1}^{\alpha_1}\cdots\pa_{x_d}^{\alpha_d}.
$$
 Throughout this paper, we use the generic constant $C$ which may vary line by line but does not affect the proof of the main result. Then our main result is stated as follows.

\begin{theorem}\label{main_thm}  Let $N\ge 3$ and $\gamma \in (1,1+\frac{2}{4N+6+d}]$. Suppose that the initial data $F_0$ is compactly supported in a way that 
	$$
 F_0=M_0+M_0^{\frac{n-2}{2n}} f_0,
	$$
If $E(f_0)$ is sufficiently small, then there exists a unique global-in-time solution $f$ to \eqref{perturbation} satisfying
\begin{enumerate}
	\item The energy functional is uniformly bounded:
	$$
	E(f)(t)+\int_0^t E(f)(s) ds\le CE(f_0).
	$$
	\item The solution $f$ decays exponentially fast: 
	$$
	\sum_{|\alpha|\le N}\| \partial^\alpha f\|^{2}_{L^2_{x,v}}\le Ce^{-Ct}.
	$$
\end{enumerate}

\end{theorem}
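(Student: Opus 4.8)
The plan is to run the nonlinear energy method, adapted to the relaxation structure of \eqref{BGK0} and to the degeneracy of the global equilibrium $M_0$ at the boundary $\partial\Omega$ of its compact velocity support. First I would make the operators in \eqref{perturbation} explicit. Write $\rho_F=1+\rho[f]$ with $\rho[f]=\int_{\R^d}M_0^{\frac{n-2}{2n}}f\,dv$, $m[f]:=\int_{\R^d}vM_0^{\frac{n-2}{2n}}f\,dv$, and $u_F=u[f]:=m[f]/(1+\rho[f])$. A Taylor expansion of $M(\rho_F,u_F;v)$ about $(1,0)$, followed by division by $M_0^{\frac{n-2}{2n}}$ (which is $(t,x)$-independent and hence commutes with every $\partial^\alpha$), produces $L(f)=Pf-f$ together with the Taylor remainder $\Gamma(f)$, where $P$ is the $L^2_v$-orthogonal projection onto the $(d+1)$-dimensional space spanned by $M_0^{\frac{n-2}{2n}}$ and $v_iM_0^{\frac{n-2}{2n}}$, $1\le i\le d$. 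The exponent $\frac{n-2}{2n}$ is chosen precisely so that $\partial_{\rho_F}M_0$ and $\partial_{u_i}M_0$ are proportional to $M_0^{\frac{n-2}{n}}$, which symmetrizes $L$; combined with the moment-reproduction identities \eqref{moment_comp} this forces $P$ to be the orthogonal projection, and gives the microscopic coercivity $\langle L(f),f\rangle_{L^2_v}=-\|(I-P)f\|_{L^2_v}^2$. Note also that, by Cauchy--Schwarz against the compactly supported weight, macroscopic moments of $f$ in $v$ are controlled by $\|f\|_{L^2_v}$, so no $v$-derivatives are needed in the energy.

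\textbf{High-order energy identity and macro--micro dissipation.} For $|\alpha|\le N$, apply $\partial^\alpha$ to \eqref{perturbation}, pair with $\partial^\alpha f$ in $L^2_{x,v}$, and sum; the transport term drops after integration over $\T^d$, giving
$$
\frac12\frac{d}{dt}E(f)+\sum_{|\alpha|\le N}\|(I-P)\partial^\alpha f\|_{L^2_{x,v}}^2=\sum_{|\alpha|\le N}\langle\partial^\alpha\Gamma(f),\partial^\alpha f\rangle_{L^2_{x,v}}
$$
(the $\partial_t$-derivatives in $\partial^\alpha f|_{t=0}$ being defined through the equation so that $E(f_0)$ is meaningful). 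Since this controls only the microscopic part $(I-P)f$, I would next recover dissipation of $Pf$ using the Shizuta--Kawashima structure of the linearized isentropic Euler system obeyed by $(\rho[f],m[f])$ --- from \eqref{conservation laws} and \eqref{moment_comp}, $\partial_t\rho[f]+\nabla_x\cdot m[f]=0$ and $\partial_tm[f]+c_\gamma\nabla_x\rho[f]+\nabla_x\!\cdot\!\int_{\R^d}v\otimes v\,(I-P)M_0^{\frac{n-2}{2n}}f\,dv=0$. Constructing a Kawashima-type interaction functional $E_{\mathrm{int}}(f)$ with $|E_{\mathrm{int}}(f)|\lesssim E(f)$ and exploiting, on the torus, the Poincar\'e inequality together with the fact that the perturbation carries no net mass or momentum for any $t$ (by \eqref{conservation laws}, for admissible data), one gets, for $\lambda>0$ small, the closed inequality
$$
\frac{d}{dt}\bigl(E(f)+\lambda E_{\mathrm{int}}(f)\bigr)+\delta E(f)\le C\sum_{|\alpha|\le N}\bigl|\langle\partial^\alpha\Gamma(f),\partial^\alpha f\rangle_{L^2_{x,v}}\bigr|,
$$
with $\mathcal E(f):=E(f)+\lambda E_{\mathrm{int}}(f)\sim E(f)$.

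\textbf{Nonlinear estimate (the main obstacle) and conclusion.} The delicate step, and the source of the hypothesis on $\gamma$, is the bound on the right-hand side. Since
$$
\Gamma(f)=M_0^{-\frac{n-2}{2n}}\Bigl[M\bigl(1+\rho[f],u[f];v\bigr)-M_0-M_0^{\frac{n-2}{2n}}Pf\Bigr],
$$
its derivatives of order $k$ in the macroscopic variables carry factors $\bigl(\tfrac{2\gamma}{\gamma-1}(1+\theta\rho)^{\gamma-1}-|v-\theta u|^2\bigr)_+^{\frac n2-k}$ whose support and rate of vanishing near $\partial\Omega$ differ from those of $M_0$; consequently, estimating $\partial^\alpha\Gamma(f)$ in $L^2_{x,v}$ against the weight $M_0^{-\frac{n-2}{2n}}$ consumes a fixed number of spare powers of $M_0$, and distributing the up to $N$ derivatives costs an additional $H^N_x\hookrightarrow L^\infty_x$ embedding. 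Carefully counting the powers required --- linear in $N$, with a $d$-dependent offset --- is exactly what forces $n-2\ge 4N+4$, i.e.\ $\gamma\le 1+\tfrac{2}{4N+6+d}$, and delivers $\sum_{|\alpha|\le N}|\langle\partial^\alpha\Gamma(f),\partial^\alpha f\rangle|\le C\sqrt{E(f)}\,E(f)$. I expect this degenerate-weight bookkeeping to be the principal difficulty. Combining with the previous step, $\frac{d}{dt}\mathcal E(f)+\delta E(f)\le C\sqrt{E(f)}\,E(f)$. Finally, a standard characteristic-based iteration furnishes unique local-in-time solutions (preserving the velocity support inside $\Omega$ and the conserved means); the continuity/bootstrap argument then shows that if $E(f_0)$ is small enough, $\mathcal E(f)(t)$ remains small, so $\frac{d}{dt}\mathcal E(f)+\frac{\delta}{2}E(f)\le0$, which integrates to assertion (1); since $\mathcal E\sim E$, this also reads $\frac{d}{dt}\mathcal E+c\,\mathcal E\le0$, giving the exponential decay (2) by Gr\"onwall.
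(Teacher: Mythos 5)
Your proposal follows the same overall nonlinear-energy strategy as the paper---the same perturbation ansatz $F=M_0+M_0^{\frac{n-2}{2n}}f$, the same orthogonal projection $P$ and coercivity $\langle L(f),f\rangle_{L^2_v}=-\|(I-P)f\|^2_{L^2_v}$, the same power counting $n\ge 4N+6$ behind the restriction $\gamma\le 1+\frac{2}{4N+6+d}$, and the same continuity/Gr\"onwall conclusion---but it treats the macroscopic dissipation by a genuinely different device. The paper follows Guo's macro--micro decomposition: writing $Pf=(a+v\cdot b)M_0^{\frac{n-2}{2n}}$ and testing the equation satisfied by $Pf$ against $\{1,v_i,v_iv_j\}M_0^{\frac{n-2}{2n}}$, it derives elliptic-type estimates (Lemmas \ref{ab estimates} and \ref{ell h}) giving $\sum_{|\alpha|\le N}\|P\partial^\alpha f\|^2_{L^2_{x,v}}\le C\sum_{|\alpha|\le N}\|(I-P)\partial^\alpha f\|^2_{L^2_{x,v}}$ for small energy, so the full coercivity of $L$ (Theorem \ref{L coer}) is obtained without modifying the energy functional; this works precisely because $\alpha$ ranges over space--time multi-indices. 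You instead propose a Kawashima-type interaction functional $E_{\mathrm{int}}$ and work with the equivalent energy $E+\lambda E_{\mathrm{int}}$. Both routes are standard and both close here (the compensator is arguably redundant once temporal derivatives are already in $E$); the paper's choice keeps $E$ itself monotone, which marginally simplifies the bootstrap and the statement of conclusion (1).

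One point you pass over too quickly. Before one may divide the second-order Taylor remainder by $M_0^{\frac{n-2}{2n}}$ at all, one must prove that the transitional equilibria $M(\tilde\theta)$ appearing in that remainder have velocity support contained in $\mathrm{supp}(M_0)$; otherwise $\Gamma(f)$ is not even finite pointwise, and no bookkeeping of ``spare powers of $M_0$'' can repair it. The paper establishes this support inclusion inside Lemma \ref{linearization}, by observing that the remainder $-I_1+I_2$ must vanish off $\mathrm{supp}(M_0)$, that both terms are nonnegative for $n>4$, and that they are linearly independent in $v$ unless $(\rho_F,u_F)=(1,0)$ or $M(\tilde\theta)=0$. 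Your remark that the supports of the Taylor factors ``differ'' from that of $M_0$ acknowledges the issue but does not resolve it; this auxiliary claim needs to be stated and proved explicitly, as it underlies both the definition of $\Gamma$ and the nonlinear estimates of Lemma \ref{nonlinear}.
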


The first step in the proof of Theorem \ref{main_thm} is to linearize the equation  \eqref{BGK0} around the global equilibrium.  For this, we insert \eqref{decomposition} into the equation, consider the transitional distribution between $M_0$ and $M[F]$, and apply the Taylor expansion to $M[F]-F$ up to second order.  In the linearization procedure, we restrict ourselves to the case $\gamma<1+\frac{2}{4+d}$  to ensure that derivatives of the transitional distribution are well-defined. This is also why the possible range of $\gamma$ in the statement of Theorem \ref{main_thm} reduces depending on $N$.  We note that the head term of the perturbation in \eqref{decomposition} is technically necessary to obtain  the coercive property of the linear operator:
$$
\langle L(f),f\rangle_{L^2_v}=-\|\{I-P \}f\|_{L^2_v}^2,
$$
which plays an important role later in the proof of Theorem \ref{main_thm}.  Here since $M_0$ is compactly supported, we need an additional analysis of the support of the transitional distribution before dividing the equation by $M_0^{\frac{n-2}{2n}}$. In fact, we need to show that the velocity support of the transitional distribution is included in that of $M_0$. More details can be found in Section 2. After obtaining the linearized equation \eqref{perturbation}, we apply the nonlinear energy method developed in \cite{Guo02,Guo03,Guo04} to prove the global existence and large-time behavior of solutions.

\noindent\newline

This paper is organized as follows. In Section 2, we introduce the linearization of \eqref{BGK0} around the global equilibrium. In Section 3, we establish the local-in-time existence of solutions to the linearized equation. Finally in Section 4, we establish the global existence and large-time behavior  of solutions.

\section{Linearization around the global equilibrium}
Consider the distribution function 
$$
F\equiv F(x,v,t),\qquad (x,v,t)\in \mathbb{T}^d\times \Omega\times \mathbb{R}_+
$$
verifying the equation \eqref{BGK0} subject to the initial data $ F(x,v,0)= F_0(x,v)$, where $\Omega\subseteq \R^d$ denotes the velocity support of the global equilibrium distribution $M_0$ presented in \eqref{global}. Then it is straightforward from \eqref{BGK0} that the velocity support of $M[f]$ is included in $\Omega$ as well.  We decompose $F$ around the global equilibrium $M_0$ of \eqref{global} as
\begin{equation}\label{decom}
F=M_0+M_0^{\frac{n-2}{2n}} f \qquad \mbox{with}\quad  F_0=M_0+M_0^{\frac{n-2}{2n}} f_0,
\end{equation}
and substitute \eqref{decom} into \eqref{BGK0} to get
\begin{align}\label{iteration22}\begin{split}
 \pa_t f + v\cdot\nabla_x f    &= M_0^{-\frac{n-2}{2n}}\left\{M[F] - M_0\right\}- f,\cr
f(x,v,0)&=f_0(x,v).
\end{split}\end{align}
Letting $n>4$, we obtain the following lemma.
\begin{lemma}\label{linearization}
Let $F=M_0+M_0^{\frac{n-2}{2n}} f$ and $\gamma\in (1,1+\frac{2}{4+d})$. We then have
\begin{align*}
M_0^{-\frac{n-2}{2n}} \{M[F] - M_0\}&=n\gamma  M_0^{-\frac{n-2}{2n}}\intr M_0^{\frac{n-2}{2n}} f dv+nvM_0^{-\frac{n-2}{2n}}\intr vM_0^{\frac{n-2}{2n}} fdv+\sum_{|\eta|=2}\frac{D^\eta M(\tilde\theta)}{\eta !}(\rho_F-1,u_F)^\eta
\end{align*}
where $M(\theta)$ denotes the transitional distribution defined by
$$
M(\theta)\equiv M(\rho_\theta, u_\theta)\quad \mbox{with}\quad (\rho_\theta, u_\theta)=\theta(\rho_F, u_F)+(1-\theta)(1, 0)\ ~\mbox{for}\ ~ \theta\in[0,1],
$$
and $\tilde\theta\in (0,1)$ dependent of $t$ and $x$. 
\end{lemma}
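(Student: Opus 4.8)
The idea is to reduce the expansion of $M[F]-M_0$ to a one-dimensional Taylor expansion along the segment joining the constant state $(1,0)$ to $(\rho_F,u_F)$, parametrised precisely by the transitional distribution $M(\theta)$. First I would record, from \eqref{moment_comp} applied to $M_0=M(1,0;v)$, that $\intr M_0\,dv=1$ and $\intr vM_0\,dv=0$; together with the decomposition \eqref{decom} this gives the algebraic identities
\[
\rho_F-1=\intr M_0^{\frac{n-2}{2n}}f\,dv,\qquad \rho_Fu_F=\intr vM_0^{\frac{n-2}{2n}}f\,dv,
\]
which are exactly the macroscopic weights occurring in the first two terms on the right-hand side, and which also exhibit $(\rho_F-1,u_F)$ as quantities of order $f$.

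Fixing $(x,t)$ and writing $z=(\rho_F-1,u_F)$ so that $(\rho_\theta,u_\theta)=(1,0)+\theta z$, I would then apply Taylor's theorem with the Lagrange remainder to the scalar map $\theta\mapsto M(\theta)(v)$ on $[0,1]$, obtaining some $\tilde\theta=\tilde\theta(x,t)\in(0,1)$ with
\[
M[F]-M_0=M(1)-M(0)=\sum_{|\eta|=1}\frac{D^\eta M(0)}{\eta!}\,z^\eta+\sum_{|\eta|=2}\frac{D^\eta M(\tilde\theta)}{\eta!}\,z^\eta,
\]
$D^\eta$ denoting differentiation in the macroscopic variables $(\rho,u)$. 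It then remains to identify the first-order coefficients. Differentiating \eqref{Maxwellian}, with $A(\rho,u;v)=\frac{2\gamma}{\gamma-1}\rho^{\gamma-1}-|v-u|^2$, yields $\partial_\rho M=cn\gamma\rho^{\gamma-2}A_+^{n/2-1}$ and $\partial_{u_i}M=cn(v_i-u_i)A_+^{n/2-1}$; evaluating at $(\rho,u)=(1,0)$, where $A_+=(M_0/c)^{2/n}$ so that $A_+^{n/2-1}=(M_0/c)^{(n-2)/n}$, one finds $\partial_\rho M(1,0;v)$ and $\partial_{u_i}M(1,0;v)$ to be $M_0^{(n-2)/n}$ and $v_iM_0^{(n-2)/n}$ multiplied by explicit constants ($n\gamma$ and $n$, up to a harmless power of $c$). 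Substituting these, multiplying through by $M_0^{-\frac{n-2}{2n}}$ as in \eqref{iteration22}, using the moment identities above (and noting that $u_F$ and $\rho_Fu_F$ differ by $(1-\rho_F)u_F$, which is quadratic in $f$ and may be shifted into the remainder), and collecting the $|\eta|=2$ contributions, one arrives at the asserted identity.

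The one non-routine ingredient is the legitimacy of this termwise expansion, namely that $\theta\mapsto M(\theta)(v)$ is genuinely $C^2$ on $[0,1]$ and that the whole manipulation is meaningful on the velocity support $\Omega$ of $M_0$, which is exactly where the restriction $\gamma\in(1,1+\tfrac{2}{4+d})$ — equivalently $n>4$ — is used: for $n>4$ the map $s\mapsto s_+^{n/2}$ is twice continuously differentiable on $\R$ (its second derivative $\tfrac{n}{2}(\tfrac{n}{2}-1)s_+^{n/2-2}$ being continuous at $0$), so that $M(\theta)(v)=cA(\theta,v)_+^{n/2}$ is $C^2$ in $\theta$ provided $\theta\mapsto\rho_\theta^{\gamma-1}$ is, which holds as soon as $\rho_\theta>0$ along the segment — and $\rho_\theta\ge\min\{1,\rho_F\}>0$ once $f$ is small enough. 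In addition, before dividing \eqref{iteration22} by $M_0^{\frac{n-2}{2n}}$ one must know that the velocity support of every transitional distribution $M(\theta)$ stays inside $\Omega$; since $\operatorname{supp}_vM(\rho_\theta,u_\theta;\cdot)$ is the ball of radius $\sqrt{\tfrac{2\gamma}{\gamma-1}\rho_\theta^{\gamma-1}}$ centred at $u_\theta$, this relies on $\rho_F$ being close to $1$ and $u_F$ close to $0$ in the perturbative regime (cf. \cite{CHS10}). I expect this regularity-and-support bookkeeping, rather than the differentiation of the Maxwellian, to be the main obstacle.
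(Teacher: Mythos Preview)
Your overall strategy matches the paper's: Taylor-expand $M(\theta)$ to second order along the segment, compute the first derivatives of the Maxwellian at $(\rho,u)=(1,0)$, identify them with the moment integrals of $f$, and isolate the Lagrange remainder. Your treatment of the $C^2$ regularity via $n>4$ and your handling of the $u_F$ versus $\rho_Fu_F$ discrepancy are correct and, if anything, more careful than the paper's own write-up.

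The gap is precisely in the support argument you flag as the main obstacle. Your claim that smallness of $f$ forces $\mathrm{supp}_vM(\theta)\subseteq\Omega$ for every $\theta\in[0,1]$ is false: since $0<\tfrac{\gamma-1}{2}<1$, the radius $R_\theta=\bigl(\tfrac{2\gamma}{\gamma-1}\bigr)^{1/2}\rho_\theta^{(\gamma-1)/2}$ is a strictly concave function of $\theta$, so $R_\theta>(1-\theta)R_0+\theta R_1$ whenever $\rho_F\neq1$. In particular, if $(\rho_F,u_F)$ saturates the constraint $|u_F|+R_1=R_0$ coming from $\mathrm{supp}_vM[F]\subseteq\Omega$ --- which can occur with $|\rho_F-1|+|u_F|$ arbitrarily small --- then $|\theta u_F|+R_\theta>R_0$ for $\theta\in(0,1)$ and the intermediate ball genuinely protrudes from $\Omega$. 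The paper does \emph{not} use smallness here. Instead it proves the inclusion only for the particular $\tilde\theta$ furnished by Taylor's theorem, via a structural argument: the second-order remainder is computed explicitly as $-I_1+I_2$ with $I_1,I_2\ge0$ (this is where $n>4$ and $\gamma<2$ are used again); outside $\Omega$ both $M[F]-M_0$ and the first-order terms vanish (since $\mathrm{supp}_vM[F]\subseteq\Omega$ by the standing hypothesis), hence so does $-I_1+I_2$; then a linear-independence-in-$v$ argument between the $M^{(n-2)/n}(\tilde\theta)$ piece and the $M^{(n-4)/n}(\tilde\theta)$ piece forces $M(\tilde\theta)=0$ outside $\Omega$ (unless $(\rho_F,u_F)=(1,0)$, in which case the conclusion is trivial). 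This algebraic step, not a perturbative bound, is what your proposal is missing.
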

\begin{proof}
By definition, $M(\theta)$ is given by 
$$
M(\theta)=c\left(\frac{2\gamma}{\gamma-1}\rho_\theta^{\gamma-1}-|v-u_\theta|^2\right)_+^{n/2} \qquad \mbox{for all }v\in \Omega.
$$
Thus, it follows from  the Taylor expansion that there exists $\tilde\theta\in(0,1)$ dependent of $t$ and $x$ satisfying
\begin{align*} 
M[F]-M_0&\equiv M(1)-M(0)\cr
&=\nabla_{\rho_\theta,u_\theta}M(\theta)\cdot (\rho_F-1,u_F)\big|_{\theta=0}+\sum_{|\eta|=2}\frac{D^\eta M(\tilde\theta)}{\eta !}(\rho_F-1,u_F)^\eta
\end{align*}
 For all the derivatives of $M(\theta)$ to be well-defined, we set $n> 4$, which corresponds to $\gamma \in (1,1+\frac{2}{4+d})$. A simple calculation gives
$$
\partial_{\rho_\theta}M(\theta)=n\gamma \rho_\theta^{\gamma-2} M^{\frac{n-2}{n}}(\theta),\quad
\nabla_{u_\theta}M(\theta)=n(v-u_\theta) M^{\frac{n-2}{n}}(\theta),
$$
and
$$
\rho_F-1=\intr M_0^{\frac{n-2}{2n}} f\,dv,\quad u_F=\intr vM_0^{\frac{n-2}{2n}} f\,dv.
$$
Thus we have
\begin{align}\label{linearized}
M[F]-M_0&=n\gamma  M^{\frac{n-2}{n}}_0\intr M_0^{\frac{n-2}{2n}} f \,dv+nvM^{\frac{n-2}{n}}_0\intr vM_0^{\frac{n-2}{2n}} f\,dv+\sum_{|\eta|=2}\frac{D^\eta M(\tilde\theta)}{\eta !}(\rho_F-1,u_F)^\eta
\end{align}
for some $\tilde\theta\in (0,1)$ dependent of $t$ and $x$. Now it only remains to show that  $supp(M(\tilde\theta))\subseteq supp(M_0)$ for all $\tilde\theta\in (0,1)$ in order to divide the above relation by $M_0^{-\frac{n-2}{2n}}$. We see that
\begin{align*} 
\pa_{\rho_\theta}^2M(\theta)&=n\gamma(\gamma-2)\rho_\theta^{\gamma-3}M^{\frac{n-2}{n}}(\theta)+n(n-2)\gamma^2\rho_\theta^{2\gamma-4}M^{\frac{n-4}{n}}(\theta),\cr
\pa_{\rho_\theta}\pa_{u_{\theta i}} M(\theta)&= n(n-2)\gamma \rho_\theta^{\gamma-2}(v_i-u_{\theta i})M^{\frac{n-4}{n}}(\theta),\cr
\pa_{u_{\theta i}}^2M(\theta)&=-nM^{\frac{n-2}{n}}(\theta)+n(n-2)(v_i-u_{\theta i})^2 M^{\frac{n-4}{n}}(\theta),\cr
\pa_{u_{\theta i}}\pa_{u_{\theta j}}M(\theta)&=n(n-2)(v_i-u_{\theta i})(v_j-u_{\theta j})M^{\frac{n-4}{n}}(\theta),
 \end{align*}
which gives
\begin{align*} 
\sum_{|\eta|=2}\frac{D^\eta M(\tilde\theta)}{\eta !}(\rho_F-1,u_F)^\eta&=-\frac n2\left(\gamma(2-\gamma)(\rho_F-1)^2 \rho_{\tilde\theta}^{\gamma-3}+|u_F|^2 \right)M^{\frac{n-2}{n}}(\tilde\theta)\cr 
&+\frac{n(n-2)}{2} \left\{\gamma (\rho_F-1)\rho_{\tilde\theta}^{\gamma-2} +\sum_{i=1}^3u_{F i}(v_i-u_{\tilde\theta i})\right\}^2M^{\frac{n-4}{n}}(\tilde \theta)\cr 
&=:-I_1+I_2.
 \end{align*}
Since $supp(M[F]\subseteq supp(M_0)$, it is straightforward from \eqref{linearized} that $-I_1+I_2$ vanishes for every $v\notin supp(M_0)$. On the other hand, $I_1$ and $ I_2$ are non-negative in case of  $\gamma\in (1,1+\frac{2}{4+d})$, i.e. $n>4$. Also, it is easy to check that for each $t$ and $x$, $M^{\frac{n-2}{n}}$ and $I_2$ are linearly independent in $v$. Therefore one can see that $-I_1+I_2=0$  only when $(\rho_F,u_F)=(1,0)$ or $M(\tilde\theta)=0$. From both cases, we conclude that  $supp(M(\tilde\theta))\subseteq supp(M_0)$. 
 \end{proof}
Applying Lemma \ref{linearization} to \eqref{iteration22}, one finds
\begin{align}\label{iteration2222}\begin{split}
\pa_t f + v\cdot\nabla_x f  &=n\gamma  M^{\frac{n-2}{2n}}_0\intr M_0^{\frac{n-2}{2n}} f \,dv+nvM^{\frac{n-2}{2n}}_0\intr vM_0^{\frac{n-2}{2n}} f\,dv- f\cr
&+M_0^{-\frac{n-2}{2n}}\sum_{|\eta|=2}\frac{D^\eta M(\tilde\theta)}{\eta !}(\rho_F-1,u_F)^\eta.
\end{split}\end{align}
On the right-hand side, the first three terms are linear parts and the last term  is a nonlinear part. To analyze in more detail, we let $\mathcal{N}$ be a $(d+1)$-dimensional space spanned by $\displaystyle\big\{M_0^{\frac{n-2}{2n}},vM_0^{\frac{n-2}{2n}} \big\}$ and denote $e_i$ ($i=1,\cdots,d+1$) by
$$
e_1=\left\{n\gamma \right\}^{\frac 12} M_0^{\frac{n-2}{2n}},\qquad e_{1+j}=n^{\frac 12}vM_0^{\frac{n-2}{2n}}\ (j=1,\cdots, d).
$$
\begin{lemma}\label{orthonormal}
	 $\displaystyle\{e_i\}_{i=1,\cdots,d+1}$ forms an orthonormal basis of $\mathcal{N}$ in $L^2_v$, i.e.
	 $$
	 \lal e_i,e_j\ral_{L^2_v}=\begin{cases}
	 1 &\mbox{if }i\neq j,\cr
	 0 &\mbox{otherwise,}
	 \end{cases} 
	 $$
	 where $\lal \cdot,\cdot,\ral_{L^2_v}$ denotes the usual $L^2$-inner product with respect to $v\in \R^d$.
\end{lemma}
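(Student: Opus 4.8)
The plan is to verify directly that $\langle e_i,e_j\rangle_{L^2_v}=\delta_{ij}$. Since each $e_i$ is a nonzero scalar multiple of one of the $d+1$ generators $M_0^{\frac{n-2}{2n}},\,v_1M_0^{\frac{n-2}{2n}},\dots,v_dM_0^{\frac{n-2}{2n}}$ of $\mathcal{N}$, once orthonormality is established the $e_i$ are automatically linearly independent, and counting dimensions shows that $\{e_i\}_{i=1,\dots,d+1}$ is an orthonormal basis of $\mathcal{N}$. All the $v$-integrals involved are finite because $M_0$, hence $M_0^{\frac{n-2}{n}}$, is bounded and compactly supported in $\Omega$ (recall $n>4$ here), so multiplication by monomials in $v$ keeps the integrand integrable.

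Rather than evaluating the Beta-type integrals $\int_{\R^d}M_0^{\frac{n-2}{n}}\,dv$ and $\int_{\R^d}v_iv_jM_0^{\frac{n-2}{n}}\,dv$ by hand (which can be done using the explicit value of $c$ together with Gamma-function identities), I would exploit the moment identity \eqref{moment_comp}, valid for $M(\rho,u;v)$ with any admissible $(\rho,u)$. Applied to the transitional distribution $M(\theta)=M(\rho_\theta,u_\theta;v)$ it reads $\int_{\R^d}M(\theta)\,dv=\rho_\theta$ and $\int_{\R^d}v\,M(\theta)\,dv=\rho_\theta u_\theta$. Differentiating these two identities in the parameters $(\rho_\theta,u_\theta)$ and using the formulas $\partial_{\rho_\theta}M(\theta)=n\gamma\rho_\theta^{\gamma-2}M^{\frac{n-2}{n}}(\theta)$ and $\nabla_{u_\theta}M(\theta)=n(v-u_\theta)M^{\frac{n-2}{n}}(\theta)$ from the proof of Lemma \ref{linearization}, then evaluating at $(\rho_\theta,u_\theta)=(1,0)$ (where $M(\theta)=M_0$), I get
\begin{align*}
n\gamma\int_{\R^d}M_0^{\frac{n-2}{n}}\,dv=1,\qquad n\int_{\R^d}v_jM_0^{\frac{n-2}{n}}\,dv=0,\qquad n\int_{\R^d}v_iv_jM_0^{\frac{n-2}{n}}\,dv=\delta_{ij},
\end{align*}
the first from $\partial_{\rho_\theta}$ of the mass identity, the second from $\partial_{u_{\theta j}}$ of the mass identity (whose right-hand side is $u_\theta$-independent), and the third from $\partial_{u_{\theta j}}$ of the momentum identity. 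These are precisely $\|e_1\|_{L^2_v}^2=1$, $\langle e_1,e_{1+j}\rangle_{L^2_v}=0$, and $\langle e_{1+i},e_{1+j}\rangle_{L^2_v}=\delta_{ij}$, which is the claim. The orthogonality $\langle e_1,e_{1+j}\rangle_{L^2_v}=0$ can alternatively be read off from the evenness of $M_0$ in $v$ and the symmetry of $\Omega$.

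The only delicate point is justifying differentiation under the integral sign, since the $v$-support of $M(\rho_\theta,u_\theta;v)$ is a ball whose center and radius vary with $(\rho_\theta,u_\theta)$. For $n>4$ this is harmless: the integrand and its first parameter-derivatives are continuous with compact support, and the boundary term generated by the moving support vanishes because $M(\theta)$ and these derivatives vanish on $\partial(\mathrm{supp}\,M(\theta))$ — the powers $M^{\frac{n-2}{n}}(\theta)$ and $M^{\frac{n-4}{n}}(\theta)$ that appear are positive. So the differentiation is legitimate near $(\rho_\theta,u_\theta)=(1,0)$ and the lemma reduces to the elementary steps above. I do not expect any genuine obstacle; the substance of the lemma is simply that the normalizing factors $\{n\gamma\}^{1/2}$ and $n^{1/2}$ chosen in the definition of the $e_i$ are exactly those forced by these moment computations.
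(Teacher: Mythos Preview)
Your argument is correct and takes a genuinely different route from the paper. The paper proves the lemma by brute-force evaluation of the integrals $\int_{\R^d}M_0^{\frac{n-2}{n}}\,dv$ and $\int_{\R^d}v_j^2M_0^{\frac{n-2}{n}}\,dv$: rescale $v$, pass to spherical coordinates, reduce to Beta integrals, and then invoke the identities $B(p,q)=\Gamma(p)\Gamma(q)/\Gamma(p+q)$, $\Gamma(r+1)=r\Gamma(r)$, and $B(x+1,y)=\frac{x}{x+y}B(x,y)$ together with the explicit value of $c$. Orthogonality of distinct $e_i$ is dismissed by oddness. Your approach sidesteps all of this special-function bookkeeping by differentiating the moment relations \eqref{moment_comp} in the macroscopic parameters and reading off the three required integrals at $(\rho,u)=(1,0)$; the constants $n\gamma$ and $n$ then appear naturally as the derivatives of $\rho\mapsto\rho$ and $u\mapsto\rho u$ composed with the prefactors in $\partial_\rho M$ and $\nabla_u M$. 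This is shorter and more conceptual, and it explains \emph{why} the normalizations $\{n\gamma\}^{1/2}$ and $n^{1/2}$ are the right ones, whereas the paper's computation merely verifies them. The price you pay is the need to justify differentiation through the moving support, which you handle correctly for $n>2$ (the first derivatives involve only the positive power $M^{\frac{n-2}{n}}$, so continuity and vanishing at the boundary suffice). Either approach is complete; yours leans on \eqref{moment_comp} (established elsewhere), while the paper's is self-contained but computationally heavier.
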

\begin{proof}
	Due to the oddness, it suffices to show that $ \lal e_i,e_i\ral_{L^2_v}=1$. For $i=1$, we use the change of variables and spherical coordinates to see  
	\begin{align*}
	\int_{\mathbb{R}^d}M_0^{\frac{n-2}{n}}\,dv &=c	\int_{\mathbb{R}^d}\left(\frac{2\gamma}{\gamma-1} -|v|^2\right)^{\frac {n-2}{2}} \mathbf{1}_{|v|^2\le \frac{2\gamma}{\gamma-1}}\,dv\cr
	&= c\left(\frac{2\gamma}{\gamma-1} \right)^{\frac{n+d}{2}-1}	\int_{\mathbb{R}^d}\left(1-|v|^2\right)^{\frac{n-2}{2}} \mathbf{1}_{|v|^2\le 1}\,dv\cr
	&= c\left(\frac{2\gamma}{\gamma-1}\right)^{\frac{n+d}{2}-1}	\left|\mathbb{S}_{d-1}\right|\int_0^1\left(1-r^2\right)^{\frac {n-2}{2}} r^{d-1}\,dr.
	\end{align*}
	It follows from the definition of $c$ and $n$ that
	\begin{align*}
	\int_{\mathbb{R}^d}M_0^{\frac{n-2}{n}}\,dv &= \frac{\Gamma\left(\frac{\gamma}{\gamma-1}\right)}{\pi^{\frac d2}\Gamma(\frac n2+1)}\left(\frac{2\gamma}{\gamma-1}\right)^{-1}	\left|\mathbb{S}_{d-1}\right|\int_0^1\left(1-r^2\right)^{\frac{n-2}{2}} r^{d-1}\,dr\cr
	&= 2 \frac{\Gamma\left(\frac{\gamma}{\gamma-1}\right)}{\Gamma(\frac n2+1)\Gamma\left(\frac{d}{2}\right)}\left(\frac{2\gamma}{\gamma-1}\right)^{-1}	\int_0^1\left(1-r^2\right)^{\frac {n-2}{2}} r^{d-1}\,dr.
	\end{align*}
Applying the change of variables again, one finds
	\begin{align*}
	\int_{\mathbb{R}^d}M_0^{\frac{n-2}{n}}\,dv &= \frac{\Gamma\left(\frac{\gamma}{\gamma-1}\right)}{\Gamma(\frac n2+1)\Gamma\left(\frac{d}{2}\right)}	\left(\frac{2\gamma}{\gamma-1}\right)^{-1}\int_0^1\left(1-r\right)^{\frac {n-2}{2}} r^{\frac d2-1}\,dr\cr
	& = \frac{\Gamma\left(\frac{\gamma}{\gamma-1}\right)}{\Gamma(\frac n2+1)\Gamma\left(\frac{d}{2}\right)}\left(\frac{2\gamma}{\gamma-1}\right)^{-1}	B\left(\frac d2,\frac n2\right),
	\end{align*}
where $B(\cdot,\cdot)$ denotes the Beta function:
	$$
	B\left(p,q\right):=\int_0^1\left(1-r\right)^{q-1} r^{p-1}\,dr.
	$$
It is well known that the beta function is related to the ratio between the Gamma functions as
	$$
	B(p,q)=\frac{\Gamma(p)\Gamma(q)}{\Gamma(p+q)},
	$$
	and the Gamma function satisfies
	$$
	\frac{\Gamma(r+1)}{\Gamma(r)}=r.
	$$
	Applying these relations together with the definition of $n$, we obtain
	\begin{align*}
	\int_{\mathbb{R}^d}M_0^{\frac{n-2}{n}}\,dv 	&=\frac{\Gamma\left(\frac{\gamma}{\gamma-1}\right)}{\Gamma(\frac n2+1)\Gamma\left(\frac{d}{2}\right)}\left(\frac{2\gamma}{\gamma-1}\right)^{-1}	\frac{\Gamma\left(\frac d2\right)\Gamma\left(\frac n2\right)}{\Gamma\left(\frac{d+n}{2}\right)}=\frac{1}{n\gamma},
	\end{align*}
which implies
	$$
	 \lal e_1,e_1\ral_{L^2_v}=  n\gamma\intr  M_0^{\frac{n-2}{n}} \,dv=1.
	$$
In a similar way, we have
	\begin{align*} 
	\int_{\mathbb{R}^d}v_j^2M_0^{\frac{n-2}{n}}\,dv 
	& = c\int_{\mathbb{R}^d}v_j^2\left(\frac{2\gamma}{\gamma-1}-|v|^2\right)^{\frac {n-2}{2}} \mathbf{1}_{|v|^2\le \frac{2\gamma}{\gamma-1}}\,dv\cr
	& = \frac cd\int_{\mathbb{R}^d}|v|^2\left(\frac{2\gamma}{\gamma-1}-|v|^2\right)^{\frac {n-2}{2}} \mathbf{1}_{|v|^2\le \frac{2\gamma}{\gamma-1}}\,dv\cr
&= \frac 1d\frac{\Gamma\left(\frac{\gamma}{\gamma-1}\right)}{\pi^{\frac d2}\Gamma(\frac n2+1)}	\left|\mathbb{S}_{d-1}\right|\int_0^1\left(1-r^2\right)^{\frac{n-2}{2}} r^{d+1}\,dr\cr
	&=\frac 1d\frac{\Gamma\left(\frac{\gamma}{\gamma-1}\right)}{\Gamma(\frac n2+1)\Gamma\left(\frac{d}{2}\right)}B\left(\frac{d}{2}+1,\frac n2\right).
	\end{align*}
	Note that the Beta function satisfies    
	$$
	B(x+1,y)=\frac{x}{x+y}B(x,y)
	$$
	for positive real numbers $x$ and $y$. Applying this, one finds
	\begin{align*} 
	\int_{\mathbb{R}^d}v_j^2M_0^{\frac{n-2}{n}}\,dv &=\frac 1d\frac{\Gamma\left(\frac{\gamma}{\gamma-1}\right)}{\Gamma(\frac n2+1)\Gamma\left(\frac{d}{2}\right)}\left\{\frac{d}{d+n}B\left(\frac{d}{2},\frac n2\right)\right\}\cr
	&=\frac 1d\frac{\Gamma\left(\frac{\gamma}{\gamma-1}\right)}{\Gamma(\frac n2+1)\Gamma\left(\frac{d}{2}\right)}\left\{\frac{d}{d+n}\frac{\Gamma\left(\frac d2\right)\Gamma\left(\frac n2\right)}{\Gamma\left(\frac{1}{\gamma-1}\right)}\right\}\cr
	&=\frac{1}{n},
 		\end{align*}
	which gives 
	$$
	 \lal e_{1+j},e_{1+j}\ral_{L^2_v}=  n\intr v_{j}^2  M_0^{\frac{n-2}{n}} \,dv=1\quad \mbox{for all } j=1,\cdots,d.
	$$
\end{proof}
From Lemma \ref{orthonormal}, we define the orthonormal projection $P$ from $L^2_v$ onto $\mathcal{N}:$ \begin{align*}
P(f):=\sum_{i=1}^{d+1} \lal f,e_i\ral_{L^2_v} e_i.
\end{align*}
Using this, \eqref{iteration2222} can be rewritten as
\begin{align}\label{linearized Cauchy}\begin{split}
\pa_t f + v\cdot\nabla_x f  &= L(f)+\Gamma(f),\cr
f(x,v,0)&= f_0(x,v),
\end{split}\end{align}
where  $L:=P-I$ is the linear operator, and $\Gamma$ is the nonlinear operator given by
\begin{align}\label{Gamma}\begin{split} 
\Gamma(f)&= -\frac n2\left(\gamma(2-\gamma)(\rho_F-1)^2 \rho_{\tilde\theta}^{\gamma-3}+|u_F|^2 \right)M^{\frac{n-2}{n}}(\tilde\theta)M_0^{-\frac{n-2}{2n}} \cr 
&+\frac{n(n-2)}{2} \left\{\gamma (\rho_F-1)\rho_{\tilde\theta}^{\gamma-2} +\sum_{i=1}^3u_{F i}(v_i-u_{\tilde\theta i})\right\}^2M^{\frac{n-4}{n}}(\tilde \theta) M_0^{-\frac{n-2}{2n}}
\end{split}\end{align} 
where $\tilde\theta$ is dependent of $t$ and $x$. In the following lemma, we present the properties of the linear operator $L$  for later use.
\begin{proposition}\label{linear}
The linear operator $L$ satisfies 
\begin{enumerate}
	\item $\displaystyle Ker(L)=\mathcal{N},$
	\item $\displaystyle\lal L(f),f\ral_{L^2_v}=-\|\{I-P\}f \|^2_{L^2_v},$
\end{enumerate}
\end{proposition}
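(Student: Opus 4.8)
The plan is to use Lemma~\ref{orthonormal} to identify $P$ with the orthogonal projection of $L^2_v$ onto the $(d+1)$-dimensional subspace $\mathcal N$. Once this is in place, $L = P - I = -(I-P)$, where $I-P$ is the orthogonal projection onto the orthogonal complement $\mathcal N^\perp$ in $L^2_v$, and both assertions reduce to standard facts about orthogonal projections in a Hilbert space.

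First I would record the elementary properties of $P$. Since $\{e_i\}_{i=1}^{d+1}$ is orthonormal in $L^2_v$ by Lemma~\ref{orthonormal}, for any $f\in L^2_v$ one has $Pf = \sum_{i=1}^{d+1}\langle f,e_i\rangle_{L^2_v} e_i\in\mathcal N$, and a one-line computation using $\langle e_i,e_j\rangle_{L^2_v} = \delta_{ij}$ gives $P^2 f = Pf$ and $\langle Pf,g\rangle_{L^2_v} = \langle f,Pg\rangle_{L^2_v}$ for all $f,g\in L^2_v$; moreover $Pf = f$ whenever $f\in\mathcal N$ (expand $f$ in the basis $\{e_i\}$ and use orthonormality). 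Hence $P$ is the orthogonal projection of $L^2_v$ onto $\mathcal N$, so $I-P$ is self-adjoint and idempotent with range $\mathcal N^\perp$.

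For (1), the equation $L(f) = 0$ is equivalent to $Pf = f$; since $Pf\in\mathcal N$ always, this forces $f\in\mathcal N$, and conversely every $f\in\mathcal N$ satisfies $Pf = f$, whence $\mathrm{Ker}(L) = \mathcal N$. For (2), I would decompose $f = Pf + (I-P)f$ and compute
\[
\langle L(f),f\rangle_{L^2_v} = -\langle (I-P)f,\, Pf + (I-P)f\rangle_{L^2_v} = -\langle (I-P)f,(I-P)f\rangle_{L^2_v} = -\|\{I-P\}f\|_{L^2_v}^2,
\]
where the middle equality uses $(I-P)f\in\mathcal N^\perp$ and $Pf\in\mathcal N$ so that the cross term vanishes. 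Alternatively, one may argue entirely by the basis expansion: with $Pf = \sum_i c_i e_i$, $c_i = \langle f,e_i\rangle_{L^2_v}$, one has $\langle Pf,f\rangle_{L^2_v} = \sum_i c_i^2 = \|Pf\|_{L^2_v}^2$, and the Pythagorean identity $\|f\|_{L^2_v}^2 = \|Pf\|_{L^2_v}^2 + \|f - Pf\|_{L^2_v}^2$ then yields $\langle L(f),f\rangle_{L^2_v} = \|Pf\|_{L^2_v}^2 - \|f\|_{L^2_v}^2 = -\|\{I-P\}f\|_{L^2_v}^2$.

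There is no genuine obstacle here: the proof is the standard dichotomy for an orthogonal projection, and the only thing requiring (minor) care is the bookkeeping that the inner product is taken in $v$ at fixed $(x,t)$ and that all the integrals involved are finite, which holds because $M_0$, and hence each $e_i$, is compactly supported in $v$ (namely on $\Omega$). The substantive input — orthonormality of $\{e_i\}$ — has already been supplied by Lemma~\ref{orthonormal}.
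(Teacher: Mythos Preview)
Your proof is correct and follows exactly the approach the paper indicates: it simply invokes Lemma~\ref{orthonormal} to identify $P$ as the orthogonal projection onto $\mathcal N$ and reads off both statements from the standard properties $P^2=P=P^*$ and $\mathrm{Ran}(P)=\mathcal N$. The paper omits the details for precisely this reason, so your write-up is a faithful expansion of the intended argument.
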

	 \begin{proof}
The proof can be directly obtained by Lemma \ref{orthonormal}, 
so we omit it.
	 	
	 	\end{proof}

\section{Local-in-time existence}
In this section, we prove the local-in-time existence of smooth solutions to \eqref{linearized Cauchy}. For this, we start with the $L^2$-estimates of  the nonlinear operator $\Gamma$.
\begin{lemma}\label{nonlinear}
Let $N\ge 3$ and $\gamma \in (1,1+\frac{2}{4N+6+d}]$. If $E(f)(t)$ is sufficiently small,	then $\Gamma(f)$ satisfies
\begin{enumerate}
	\item[(1)] $\displaystyle \|\pa^\alpha \Gamma(f)\|_{L^2_v}\le C\sum_{|\beta|\le |\alpha|}\|\pa^{\beta}f\|_{L^2_v}\|\pa^{\alpha-\beta}f\|_{L^2_v}, $
	\item[(2)]$\displaystyle
	\lal \pa^\alpha \Gamma (f),h\ral_{L^2_v}\le C\sum_{|\beta|\le |\alpha|}\|\pa^{\beta}f\|_{L^2_v}\|\pa^{\alpha-\beta}f\|_{L^2_v}\|h\|_{L^2_v}.
	$
\end{enumerate}
	
\end{lemma}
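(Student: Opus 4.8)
Since the second assertion follows from the first by a single Cauchy--Schwarz inequality in $v$, the plan is to prove (1). The starting observation is that $\Gamma(f)$, as written in \eqref{Gamma}, is a finite sum of terms of the form $W(\theta;v)\,Q(\rho_F-1,u_F)$, where $Q$ is a homogeneous quadratic polynomial in the macroscopic fluctuations and $W(\theta;v)$ is a velocity weight built from $\rho_\theta^{\gamma-k}$ ($k=2,3,4$), polynomials in $v-u_\theta$, and either $M(\theta)^{\frac{n-2}{n}}$ or $M(\theta)^{\frac{n-4}{n}}$, all multiplied by the singular factor $M_0^{-\frac{n-2}{2n}}$. Since the mean-value point $\tilde\theta$ carries no controllable $x,t$-derivatives, I would first pass to the integral form of the Taylor remainder,
\[
\Gamma(f)=M_0^{-\frac{n-2}{2n}}\sum_{|\eta|=2}\frac{2}{\eta!}(\rho_F-1,u_F)^\eta\int_0^1(1-\theta)\,D^\eta M(\theta)\,d\theta,
\]
with the explicit second derivatives $D^\eta M(\theta)$ computed in the proof of Lemma~\ref{linearization}; equivalently one may note that $\Gamma(f)$ is $M_0^{-\frac{n-2}{2n}}M[F]$ minus its zeroth- and first-order Taylor parts about $(\rho_F,u_F)=(1,0)$, an explicit parameter-free functional whose expansion starts at second order. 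In either form all $x,t$-derivatives fall only on $\rho_F$, $u_F$ (hence on $\rho_\theta=1+\theta(\rho_F-1)$ and $u_\theta=\theta u_F$), and the integration in $\theta$ is harmless.

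The macroscopic fluctuations are handled first. One has $\rho_F-1=\intr M_0^{\frac{n-2}{2n}}f\,dv$ and $\intr vM_0^{\frac{n-2}{2n}}f\,dv$ for the momentum (using that $M_0$ is even), while $u_F$ is this momentum divided by $\rho_F$, which stays in $[\tfrac12,\tfrac32]$ once $E(f)$ is small. By Cauchy--Schwarz in $v$ and the integrability $M_0^{\frac{n-2}{n}},\,|v|^2M_0^{\frac{n-2}{n}}\in L^1_v$ established in the proof of Lemma~\ref{orthonormal}, one obtains the pointwise bound $|\pa^\beta(\rho_F-1)(x,t)|+|\pa^\beta(\rho_Fu_F)(x,t)|\le C\|\pa^\beta f(x,t,\cdot)\|_{L^2_v}$, and $\pa^\beta(1/\rho_F)$ is controlled by the usual chain-rule (Moser) estimate. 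Distributing $\pa^\alpha$ over $W(\theta;v)\,Q(\rho_F-1,u_F)$ by Leibniz, the derivatives landing on $Q$ produce exactly the bilinear sum on the right of (1); the cubic and higher-order contributions arising when some derivatives land on the weight are absorbed using the smallness of $E(f)$, via $\|\pa^\delta f\|_{L^\infty_xL^2_v}\le C\sqrt{E(f)}$ for the low-order $\delta$ involved (Sobolev embedding on $\T^d$).

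The genuine difficulty is the uniform $L^\infty_v$-control of $W(\theta;v)$ and of all its $x,t$-derivatives up to order $N$, uniformly in $\theta\in[0,1]$ and $(x,t)$. Differentiating $W$ only touches its dependence on $(\rho_\theta,u_\theta)$; using $\pa_{\rho_\theta}M(\theta)=n\gamma\rho_\theta^{\gamma-2}M(\theta)^{\frac{n-2}{n}}$ and $\nabla_{u_\theta}M(\theta)=n(v-u_\theta)M(\theta)^{\frac{n-2}{n}}$, each derivative lowers the power of $M(\theta)$ by $2/n$ and produces factors that are bounded on $\mathrm{supp}\,M(\theta)$ (where $|v-u_\theta|^2\le\frac{2\gamma}{\gamma-1}\rho_\theta^{\gamma-1}$ and $\rho_\theta\in[\tfrac12,\tfrac32]$) times a derivative of $(\rho_F,u_F)$. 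Hence after at most $N$ derivatives the most singular velocity factor occurring is $M(\theta)^{\frac{n-4-2N}{n}}M_0^{-\frac{n-2}{2n}}$. To bound it I would invoke $\mathrm{supp}\,M(\theta)\subseteq\mathrm{supp}\,M_0$, proved in Lemma~\ref{linearization} for $\theta=\tilde\theta$ and for every $\theta\in[0,1]$ by the same support computation together with $\mathrm{supp}\,M[F]\subseteq\mathrm{supp}\,M_0$ at the endpoint $\theta=1$. On $\mathrm{supp}\,M(\theta)$ the quantity is clearly bounded whenever the two support balls are strictly nested; in the borderline case where they are internally tangent, $M(\theta)$ and $M_0$ both vanish like $\delta^{n/2}$ near the common boundary sphere ($\delta$ = distance to it), so
\[
M(\theta)^{\frac{n-4-2N}{n}}M_0^{-\frac{n-2}{2n}}\sim\delta^{\frac{n-6-4N}{4}},
\]
which stays bounded exactly when $n\ge4N+6$, i.e. $\gamma\le1+\frac{2}{4N+6+d}$ since $n=\frac{2}{\gamma-1}-d$. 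This is precisely the hypothesis of the lemma, and this compensation of the singular factor $M_0^{-\frac{n-2}{2n}}$ against $N$ derivatives of $M(\theta)$ is the crux of the proof; assembling it with Leibniz and the triangle inequality in $L^2_v$ gives (1), and (2) then follows by one Cauchy--Schwarz in $v$.
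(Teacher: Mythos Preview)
Your overall route---deduce (2) from (1) by Cauchy--Schwarz, expand by Leibniz, control $\pa^\beta(\rho_F-1)$ and $\pa^\beta u_F$ by $\|\pa^\beta f\|_{L^2_v}$, and count how far the exponent of $M(\theta)$ drops after $N$ derivatives---is precisely the paper's. Your boundary computation $M(\theta)^{\frac{n-4-2N}{n}}M_0^{-\frac{n-2}{2n}}\sim\delta^{(n-6-4N)/4}$ even supplies the justification for the threshold $n\ge 4N+6$ that the paper only asserts, and your observation that the Lagrange point $\tilde\theta$ should not be differentiated is well taken (the paper does differentiate the expression \eqref{Gamma} as written).

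There is, however, a real gap in your replacement of $\tilde\theta$ by the integral remainder. You claim $\mathrm{supp}\,M(\theta)\subseteq\mathrm{supp}\,M_0$ for \emph{every} $\theta\in[0,1]$ ``by the same support computation'' as in Lemma~\ref{linearization}. That argument is not transferable: it hinges on the exact Taylor identity at the Lagrange point, which forces the specific combination $-I_1+I_2$ to vanish outside $\mathrm{supp}\,M_0$, after which the $v$-linear independence of $M(\tilde\theta)^{\frac{n-2}{n}}$ and $I_2$ finishes the job. No such identity is available at a generic $\theta$, and in fact the inclusion can fail. Writing $h(\theta)=|u_\theta|+r_\theta$ with $r_\theta=\sqrt{\tfrac{2\gamma}{\gamma-1}}\,\rho_\theta^{(\gamma-1)/2}$, one has $h(0)=r_0$, $h(1)\le r_0$ (this is $\mathrm{supp}\,M[F]\subseteq\mathrm{supp}\,M_0$), and $h$ is concave since $(\gamma-1)/2\in(0,1)$; but $h'(0)=|u_F|-r_0\tfrac{\gamma-1}{2}(1-\rho_F)$, and the endpoint constraint only gives $|u_F|\le r_0(1-\rho_F^{(\gamma-1)/2})$, which is strictly larger than $r_0\tfrac{\gamma-1}{2}(1-\rho_F)$ whenever $\rho_F<1$. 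So $h'(0)>0$ is allowed, and then $h(\theta)>r_0$ for small $\theta>0$: the ball $\mathrm{supp}\,M(\theta)$ protrudes from $\mathrm{supp}\,M_0$. At such $\theta$, $M(\theta)$ stays bounded away from zero on a neighbourhood of part of $\partial\,\mathrm{supp}\,M_0$, so $M_0^{-\frac{n-2}{2n}}M(\theta)^{\frac{n-4-2N}{n}}$ fails to lie in $L^\infty_v$ (and in $L^2_v$) for any $n$, and your uniform-in-$\theta$ weight bound breaks down. The paper's use of the single point $\tilde\theta$, whatever its own defects, is what makes the support inclusion---and hence the weight estimate---go through.
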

\begin{proof} Since (2) directly follows from (1), it suffices to prove (1) only. Recall from \eqref{Gamma} that  
\begin{align*}
\Gamma(f)&=-\frac n2\left(\gamma(2-\gamma)(\rho_F-1)^2 \rho_{\tilde\theta}^{\gamma-3}+|u_F|^2 \right)M^{\frac{n-2}{n}}(\tilde\theta)M_0^{-\frac{n-2}{2n}}\cr 
&+\frac{n(n-2)}{2} \left\{\gamma (\rho_F-1)\rho_{\tilde\theta}^{\gamma-2} +\sum_{i=1}^3u_{F i}(v_i-u_{\tilde\theta i})\right\}^2M^{\frac{n-4}{n}}(\tilde \theta)M_0^{-\frac{n-2}{2n}}
\end{align*}
where $supp(M(\tilde\theta))\subseteq supp(M_0)$.  Observe that  under the assumption $E(f)(t)\ll 1$, we have 
\begin{equation}\label{rho theta}
|\rho_\theta-1|=\theta\left| \intr M_0^{\frac{n-2}{2n}} f\,dv \right|\le C \|f\|_{L^2_v} \le C\| \|f\|_{L^2_v}\|_{L^\infty_x}\le C\{E(f)\}^{\frac 12}\ll 1,
\end{equation} 
and
\begin{equation}\label{u theta}
|u_{\theta}|=\theta\left|\frac{1}{\rho_F}\intr vM_0^{\frac{n-2}{2n}} f\,dv\right|\le \frac{\left|\intr vM_0^{\frac{n-2}{2n}} f\,dv\right|}{1+\intr M_0^{\frac{n-2}{2n}} f\,dv} \le C\|f\|_{L^2_v} \ll 1
\end{equation}
where we used H\"{o}lder's inequality and the Sobolev embedding $H^2(\T^3)\subseteq L^\infty(\T^3)$
Obviously, it holds true for $\rho_F-1$ and $u_F$ as well by definition. In the same manner,  one finds
$$
|\pa^\alpha \{\rho_F-1\}|+|\pa^\alpha u_F|+|\pa^\alpha\rho_\theta|+|\pa^\alpha u_\theta| \le C\|\pa^\alpha f\|_{L^2_v}.
$$
From these observations, one can see that
\begin{align*}\begin{split}
|\pa^\alpha \Gamma(f)|&\le C(1+|v|^2)M_0^{-\frac{n-2}{2n}}\sum_{\substack{|\beta_1|+|\beta_2|\cr +|\beta_3|\le|\alpha|}} \|\partial^{\beta_1}f\|_{L^2_v}\|\partial^{\beta_2}f\|_{L^2_v}\left|\pa^{\beta_3}\{M^{\frac{n-2}{n}}(\tilde\theta)+M^{\frac{n-4}{n}}(\tilde\theta)\}\right|\cr 
&\le CM_0^{-\frac{n-2}{2n}}\sum_{\substack{|\beta_1|+|\beta_2|\cr +|\beta_3|\le|\alpha|}} \|\partial^{\beta_1}f\|_{L^2_v}\|\partial^{\beta_2}f\|_{L^2_v} \mathcal{P}\left(\|f\|_{L^2_v},\cdots,\|\pa^{\beta_3}f\|_{L^2_v}\right)\left(M^{\frac{n-2}{n}}(\tilde\theta)+ M^{\frac{n-4-2|\beta_3|}{n}}(\tilde\theta) \right),
\end{split}\end{align*}
for some generic polynomial $\mathcal{P}$. Noticing that $supp(M(\tilde\theta))\subseteq supp(M_0)$, $|\beta_3|\le N$, and $M(\theta)$ are uniformly bounded due to \eqref{rho theta} and \eqref{u theta},   we get
$$
M_0^{-\frac{n-2}{2n}}\left(M^{\frac{n-2}{n}}(\tilde\theta)+ M^{\frac{n-4-2|\beta_2|}{n}}(\tilde\theta) \right)< \infty
$$
for $n\ge 4N+6$, which corresponds to $\gamma \in (1,1+\frac{2}{4N+6+d}]$. Finally, applying the Sobolev embedding $H^2(\R^3)\subseteq L^\infty(\R^3)$  again to lower order derivatives, we obtain the desired result.
\end{proof}

\begin{lemma}\label{nonlinear2}
	Let $N\ge 3$ and  $\gamma \in (1,1+\frac{2}{6+d}]$. If $E(f)(t)$ and $E(g)(t)$ are sufficiently small,	we then have
$$
\left\lal \Gamma(f)-\Gamma(g), h\right\ral_{L^2_v}\le C\|f-g\|_{L^2_v}\|h \|_{L^2_v}.
$$
\end{lemma}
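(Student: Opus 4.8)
The plan is to reduce the whole estimate to one weighted $L^2_v$ bound on a difference of (perturbed) equilibria. Set $F=M_0+M_0^{\frac{n-2}{2n}}f$ and $G=M_0+M_0^{\frac{n-2}{2n}}g$. Comparing \eqref{iteration22} with \eqref{linearized Cauchy} (recall $L=P-I$), or equivalently reading off from \eqref{Gamma} and Lemma \ref{linearization} that $M_0^{-\frac{n-2}{2n}}\{M[F]-M_0\}$ equals $\Gamma(f)$ plus the two linear terms $n\gamma M_0^{\frac{n-2}{2n}}\int_{\mathbb{R}^d}M_0^{\frac{n-2}{2n}}f\,dv+nv\,M_0^{\frac{n-2}{2n}}\int_{\mathbb{R}^d}vM_0^{\frac{n-2}{2n}}f\,dv$, which by Lemma \ref{orthonormal} are exactly $\sum_i\langle f,e_i\rangle e_i=Pf$, one obtains the identity
$$
\Gamma(f)=M_0^{-\frac{n-2}{2n}}\big\{M[F]-M_0\big\}-Pf .
$$
Subtracting the same identity for $g$,
$$
\Gamma(f)-\Gamma(g)=M_0^{-\frac{n-2}{2n}}\big\{M[F]-M[G]\big\}-P(f-g) .
$$
Since $P$ is an orthogonal projection on $L^2_v$, $\langle P(f-g),h\rangle_{L^2_v}\le\|f-g\|_{L^2_v}\|h\|_{L^2_v}$, so by Cauchy--Schwarz the lemma follows once we show $\big\|M_0^{-\frac{n-2}{2n}}\{M[F]-M[G]\}\big\|_{L^2_v}\le C\|f-g\|_{L^2_v}$.

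To obtain this I would write $b(v)=\tfrac{2\gamma}{\gamma-1}-|v|^2$, $a_F(v)=\tfrac{2\gamma}{\gamma-1}\rho_F^{\gamma-1}-|v-u_F|^2$ and $a_G$ analogously, so that $M_0=c\,b_+^{n/2}$, $M[F]=c\,(a_F)_+^{n/2}$, $M[G]=c\,(a_G)_+^{n/2}$, and use two pointwise facts on $\mathrm{supp}\,M_0$. First, by H\"older's inequality and the compact support of $M_0$, together with the smallness of $E(f),E(g)$ (exactly as in \eqref{rho theta}--\eqref{u theta}), one has $|\rho_F-1|+|u_F|+|\rho_G-1|+|u_G|\ll1$ and, taking differences, $|\rho_F-\rho_G|+|u_F-u_G|\le C\|f-g\|_{L^2_v}$; hence $|a_F(v)-a_G(v)|\le C\|f-g\|_{L^2_v}$ uniformly for $v\in\mathrm{supp}\,M_0$. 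Second — and this is the crucial point — one has the comparison $0\le(a_F)_+,\,(a_G)_+\le b_+$ on $\mathrm{supp}\,M_0$. Indeed, the standing hypothesis of Section 2 that $\mathrm{supp}\,M[F]\subseteq\mathrm{supp}\,M_0$ is equivalent to $|u_F|+r_F\le R$ with $R=\sqrt{\tfrac{2\gamma}{\gamma-1}}$ and $r_F=R\,\rho_F^{(\gamma-1)/2}$, and then $b-a_F=\tfrac{2\gamma}{\gamma-1}(1-\rho_F^{\gamma-1})+|u_F|^2-2v\cdot u_F$ has minimum over $\{|v|\le R\}$ equal to $(R-r_F)(R+r_F)+|u_F|^2-2R|u_F|$; the quadratic $x\mapsto(R-r_F)(R+r_F)+x^2-2Rx$ is decreasing on $[0,R-r_F]$ and vanishes at $x=R-r_F$, hence is $\ge0$ at $x=|u_F|\in[0,R-r_F]$, giving $b\ge a_F$ there. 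The same holds for $G$.

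Granting these, I would conclude with the elementary inequality $|t_+^{n/2}-s_+^{n/2}|\le\tfrac n2\big(\max\{t,s\}\big)_+^{\frac n2-1}|t-s|$, valid since $n/2\ge1$. Taking $t=a_F(v)$, $s=a_G(v)$ and using the two facts above, on $\mathrm{supp}\,M_0$ one gets $|M[F]-M[G]|\le C\,b_+^{\frac n2-1}\|f-g\|_{L^2_v}=C\,M_0^{\frac{n-2}{n}}\|f-g\|_{L^2_v}$; since $M[F]-M[G]$ vanishes off $\mathrm{supp}\,M_0$, this yields
$$
\big\|M_0^{-\frac{n-2}{2n}}\big\{M[F]-M[G]\big\}\big\|_{L^2_v}^2\le C\|f-g\|_{L^2_v}^2\int_{\mathbb{R}^d}M_0^{\frac{n-2}{n}}\,dv=\frac{C}{n\gamma}\,\|f-g\|_{L^2_v}^2 ,
$$
the last equality by Lemma \ref{orthonormal}. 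Together with the bound for the $P(f-g)$ term, this proves the assertion.

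The step I expect to be the real obstacle is the support comparison $(a_F)_+,(a_G)_+\le b_+$ on $\mathrm{supp}\,M_0$; the rest is bookkeeping. Its necessity is clear: $M_0^{-\frac{n-2}{2n}}$ grows like $\mathrm{dist}(\cdot,\partial\,\mathrm{supp}\,M_0)^{-(n-2)/4}$ near the boundary and is itself not in $L^2_v$ once $n\ge4$, so $M[F]-M[G]$ must decay toward $\partial\,\mathrm{supp}\,M_0$ at least as fast as $M_0^{\frac{n-2}{n}}$ — precisely what the inclusion $\mathrm{supp}\,M[F]\subseteq\mathrm{supp}\,M_0$ (forced, as in Section 2, by $M[F]=\partial_tF+v\cdot\nabla_xF+F$ being supported in $\mathrm{supp}_vF$) delivers. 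An alternative route, parallel to the proof of Lemma \ref{nonlinear}, is to expand $\Gamma(f)-\Gamma(g)$ directly from \eqref{Gamma} and bound it term by term; that variant needs $M_0^{-\frac{n-2}{2n}}M^{\frac{n-4}{n}}(\tilde\theta)$ to be bounded, which is exactly what the stated condition $\gamma\le1+\frac{2}{6+d}$, i.e. $n\ge6$, guarantees.
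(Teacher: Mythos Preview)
Your proof is correct and takes a genuinely different route from the paper. The paper's own proof simply refers back to Lemma \ref{nonlinear}: one expands $\Gamma(f)-\Gamma(g)$ term by term from the Taylor-remainder formula \eqref{Gamma}, applies the triangle inequality, and bounds each piece using $\text{supp}(M(\tilde\theta))\subseteq\text{supp}(M_0)$ exactly as in Lemma \ref{nonlinear}; this is precisely the ``alternative route'' you sketch in your last sentence, and it is what forces the condition $n\ge 6$ (so that $M_0^{-\frac{n-2}{2n}}M^{\frac{n-4}{n}}(\tilde\theta)$ stays bounded). Your main argument instead bypasses the Taylor expansion entirely: from the identity $\Gamma(f)=M_0^{-\frac{n-2}{2n}}\{M[F]-M_0\}-Pf$ you reduce everything to a single Lipschitz estimate $|t_+^{n/2}-s_+^{n/2}|\le\tfrac n2(\max\{t,s\})_+^{n/2-1}|t-s|$ together with the pointwise comparison $(a_F)_+,(a_G)_+\le b_+$ on $\text{supp}\,M_0$. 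Your derivation of that comparison from the standing inclusion $\text{supp}\,M[F]\subseteq\text{supp}\,M_0$ (i.e.\ $|u_F|+r_F\le R$) is correct and is in fact a sharpening of what Lemma \ref{linearization} establishes for the intermediate $M(\tilde\theta)$. The payoff is a cleaner argument that, as written, only uses $n\ge 2$ rather than $n\ge 6$; the paper's approach, on the other hand, is more mechanical and parallels Lemma \ref{nonlinear} directly. Both approaches rest on the same support hypothesis from Section 2, so neither has an advantage on that front.
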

\begin{proof}
The proof can be obtained by the triangle inequality and  almost the  same methodology as in the proof of Lemma \ref{nonlinear2}, so we omit it to avoid repetition.
\end{proof}
To construct the local solution, we consider the approximation sequence $\{f^k\}$  of \eqref{linearized Cauchy} as 
\begin{align}\label{approximate}\begin{split}
\pa_t f^{k+1} + v\cdot\nabla_x f^{k+1}  &= P(f^k)- f^{k+1}+\Gamma(f^k) 
\end{split}\end{align}
with the initial data and first iteration step:
$$
f^k(x,v,0)=f_0(x,v)\quad\mbox{for all }k\ge 1 \quad\mbox{and}\quad f^0(x,v,t)=f_0(x,v).
$$

\begin{lemma}\label{inductive}
	There exist positive constants $\eta$ and $T_*$ such that if $E(f_0)\le \frac \eta 2$, then $E(f^k)(t)\le \eta$ implies $E(f^{k+1})(t)\le \eta$  for all $  t\in [0,T_*]$.
\end{lemma}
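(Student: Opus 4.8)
The plan is to run a standard energy estimate on the approximation scheme \eqref{approximate}, closing an induction via a Gr\"onwall-type argument on a short time interval $[0,T_*]$. First I would fix the constants: let $C_0$ be the constant appearing in Lemma \ref{nonlinear}(1)--(2) and in the bounds \eqref{rho theta}--\eqref{u theta} (valid once $E(f^k)(t)\le\eta$ with $\eta$ small enough that all those smallness hypotheses hold), choose $\eta$ small and $T_*$ small, both to be determined at the end, and assume the inductive hypothesis $E(f^k)(t)\le\eta$ on $[0,T_*]$.

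Next, for each multi-index $\alpha$ with $|\alpha|\le N$, I would apply $\pa^\alpha$ to \eqref{approximate}, take the $L^2_{x,v}$ inner product with $\pa^\alpha f^{k+1}$, and use $\lal v\cdot\nabla_x \pa^\alpha f^{k+1},\pa^\alpha f^{k+1}\ral_{L^2_{x,v}}=0$ (integration by parts on the torus) together with $\lal \pa^\alpha f^{k+1},\pa^\alpha f^{k+1}\ral = \|\pa^\alpha f^{k+1}\|^2_{L^2_{x,v}}$ coming from the $-f^{k+1}$ term. This gives
\begin{align*}
\frac12\frac{d}{dt}\|\pa^\alpha f^{k+1}\|^2_{L^2_{x,v}}+\|\pa^\alpha f^{k+1}\|^2_{L^2_{x,v}}
=\lal \pa^\alpha P(f^k),\pa^\alpha f^{k+1}\ral_{L^2_{x,v}}+\lal \pa^\alpha\Gamma(f^k),\pa^\alpha f^{k+1}\ral_{L^2_{x,v}}.
\end{align*}
For the linear term, since $P$ is the orthogonal projection onto $\mathcal N$ built from the fixed functions $e_i$, one has $\|\pa^\alpha P(f^k)\|_{L^2_{x,v}}\le \|\pa^\alpha f^k\|_{L^2_{x,v}}$ (the $e_i$ are $x,t$-independent, so $\pa^\alpha$ commutes with $P$), hence by Cauchy--Schwarz and Young's inequality that term is bounded by $\tfrac12\|\pa^\alpha f^k\|^2_{L^2_{x,v}}+\tfrac12\|\pa^\alpha f^{k+1}\|^2_{L^2_{x,v}}$. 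For the nonlinear term, I would integrate Lemma \ref{nonlinear}(2) in $x$ and apply Cauchy--Schwarz in $x$, using the Sobolev embedding $H^2(\T^d)\hookrightarrow L^\infty(\T^d)$ (note $N\ge3$) to put the lower-order factor in $L^\infty_x$; this yields $\lal \pa^\alpha\Gamma(f^k),\pa^\alpha f^{k+1}\ral_{L^2_{x,v}}\le C E(f^k)^{1/2}\,E(f^k)^{1/2}\,\|\pa^\alpha f^{k+1}\|_{L^2_{x,v}}\le C E(f^k)\big(E(f^k)+E(f^{k+1})\big)^{1/2}$, which after Young's inequality and summation over $|\alpha|\le N$ contributes something like $C\eta\,E(f^k)+\tfrac12 E(f^{k+1})$ — the point being a factor of $\eta$ (smallness) in front of $E(f^k)$.

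Summing over all $|\alpha|\le N$ and absorbing the $\tfrac12 E(f^{k+1})$ terms appearing on the right into the $E(f^{k+1})$ on the left, I obtain a differential inequality of the form
\begin{align*}
\frac{d}{dt}E(f^{k+1})(t)\le C_1 E(f^k)(t)\le C_1\eta,
\end{align*}
with $C_1$ depending only on $C_0,N$ (and where I have used the inductive bound $E(f^k)\le\eta$). Integrating from $0$ to $t\le T_*$ and using $E(f_0)\le\eta/2$ gives $E(f^{k+1})(t)\le \eta/2+C_1\eta T_*$. Choosing $T_*\le 1/(2C_1)$ then forces $E(f^{k+1})(t)\le\eta$ on $[0,T_*]$, closing the induction; $\eta$ itself is chosen at the outset small enough that the smallness hypotheses of Lemma \ref{nonlinear} (and of \eqref{rho theta}--\eqref{u theta}) hold whenever $E\le\eta$.

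The only genuine subtlety — and the step I would be most careful about — is the treatment of the nonlinear term: one must check that $\pa^\alpha$ falling on $\Gamma(f^k)$, which contains the transitional Maxwellian $M(\tilde\theta)$ with $\tilde\theta=\tilde\theta(t,x)$ and the singular weight $M_0^{-(n-2)/(2n)}$, still produces an $L^2_v$-bound of the claimed product form. But this is exactly the content of Lemma \ref{nonlinear}, whose proof already exploits $\mathrm{supp}(M(\tilde\theta))\subseteq\mathrm{supp}(M_0)$ and the range $\gamma\in(1,1+\tfrac{2}{4N+6+d}]$ to control $M_0^{-(n-2)/(2n)}(M^{(n-2)/n}(\tilde\theta)+M^{(n-4-2|\beta_3|)/n}(\tilde\theta))$; so here I may simply invoke it. Everything else is routine energy-method bookkeeping, and no use of the coercivity in Proposition \ref{linear} is needed at this local stage — that is reserved for the global estimate in Section 4.
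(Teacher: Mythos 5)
Your proposal is correct and follows essentially the same route as the paper: the same energy estimate on the iteration scheme, the same Cauchy--Schwarz/Young treatment of the projection term, the same invocation of Lemma \ref{nonlinear} plus the Sobolev embedding for the nonlinear term, and the same conclusion $E(f^{k+1})(t)\le \eta/2+C\eta T_*$ closed by taking $T_*$ small. The only cosmetic difference is that the paper keeps a small positive damping coefficient $(1-\e-C\eta^{1/2})$ on $E(f^{k+1})$ and integrates with the exponential factor, whereas you absorb the $E(f^{k+1})$ contributions completely and integrate the resulting inequality $\frac{d}{dt}E(f^{k+1})\le C_1\eta$ directly; both yield the same bound.
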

\begin{proof}
Applying $\pa^\alpha$ to \eqref{approximate}, taking $L^2$-inner product with $\pa^\alpha f^{k+1}$, and summing over $|\alpha|<N$, we have
	\begin{align*}
\frac 12	\frac{d}{dt} \sum_{|\alpha|\le N}\|\pa^\alpha f^{k+1}\|^2_{L^2_{x,v}}+\sum_{|\alpha|\le N}\|\pa^\alpha f^{k+1}\|^2_{L^2_{x,v}}&=\sum_{|\alpha|\le N}\lal P(\pa^\alpha f^k), \pa^\alpha f^{k+1}\ral_{L^2_{x,v}}+\sum_{|\alpha|\le N}\lal \pa^\alpha \Gamma (f^k), \pa^\alpha f^{k+1}\ral_{L^2_{x,v}}\cr
&=:I_1+I_2
	\end{align*}
	where 
\begin{align}\label{I1} \begin{split}
I_1&=\sum_{|\alpha|\le N}\int_{\R^3} \sum_{i=1}^4\lal \pa^\alpha f^{k}, e_i \ral_{L^2_v}\lal \pa^\alpha f^{k+1}, e_i \ral_{L^2_v}  \,dx\cr
&\le C\sum_{|\alpha|\le N}\int_{\R^3}  \|\pa^\alpha f^{k}\|_{L^2_v}\|\pa^\alpha f^{k+1}\|_{L^2_v}\,dx\cr
&\le C_\e\sum_{|\alpha|\le N}\|\pa^\alpha f^k\|^2_{L^2_{x,v}}+\e\sum_{|\alpha|\le N}\|\pa^\alpha f^{k+1}\|^2_{L^2_{x,v}}.
\end{split}\end{align}
Also, it follows from Lemma \ref{nonlinear} that
\begin{align*}
I_2\le  C\sum_{\substack{|\alpha_1|+|\alpha_2|\le  N}}\|\pa^{\alpha_1}f^k\|_{L^2_v}\|\pa^{\alpha_2}f^k\|_{L^2_v}\|\pa^\alpha f^{k+1}\|_{L^2_v}.
\end{align*}
Applying $H^2(\T^3 )\subseteq L^\infty(\T^3 )$ to the lower order derivatives and using Young's inequality gives
\begin{align*}
I_2\le  C\{E(f^k)(t)\}^{\frac 12}\sum_{|\alpha|\le N}\left( \|\pa^{\alpha}f^k\|^2_{L^2_v}+\|\pa^\alpha f^{k+1}\|_{L^2_v}^2\right).
\end{align*}
Combining these estimates, we get
\begin{align}\label{E estimate}
\frac 12 \frac {d}{dt}E(f^{k+1})(t)+(1-\e-C\eta^{\frac 12})E(f^{k+1})(t)\le C_\e E(f^k)(t)+ C\{E(f^k)(t)\}^{\frac 32}.
\end{align}
Taking $\e$ small enough and integrating over $t\in [0,T_*]$, one finds
\begin{align*}
E(f^{k+1})(t)&\le e^{-C_\eta t}E(f^0)+C\int_0^t e^{-C_\eta (t-s)} E(f^k)(s)+ \{E(f^k)(s)\}^{\frac 32}\,ds\cr
&\le \frac \eta 2+CT_*\eta.
\end{align*}
Finally, taking $T_*$ small enough we obtain the desired result.
	\end{proof}
Now we are ready to prove the existence of unique local solution $f(x,v,t)$ to \eqref{linearized Cauchy}. 

\begin{proposition}\label{local}
	Let $N\ge 3$, $\gamma\in (1,1+\frac{2}{4N+6+d}]$. Assume that  $F_0=M_0+M_0^{\frac{n-2}{2n}}f_0$ is non-negative and satisfies 
	$$
\iint_{\T^d\times\R^d}M_0^{\frac{n-2}{2n}}f_0 \,dxdv=0,\quad \iint_{\T^d\times\R^d}vM_0^{\frac{n-2}{2n}}f_0 \,dxdv=0.
$$
	Then there exist $\eta$ and $T_*$ such that if  $E(f_0)\le \frac\eta 2$ and $T_*\le \frac \eta 2$, \eqref{linearized Cauchy} admits a local-in-time existence of a unique solution $f(x,v,t)$ satisfying that for $t\in [0,T_*)$,
	\begin{enumerate}
		\item[(1)]  The distribution function $F(x,v,t)$ is non-negative:
		$$
		F(x,v,t)=M_0+M_0^{\frac{n-2}{2n}}f \ge 0.
		$$ 
		\item[(2)]  The energy functional $E(f)(t)$ is continuous and uniformly bounded:
		$$
		\sup_{0\le t\le T_*}E(f)(t)\le \eta.
		$$
		\item[(3)] The total mass and momentum of perturbation parts are equal to zero:
		$$
		\iint_{\T^d\times\R^d}M_0^{\frac{n-2}{2n}}f \,dxdv=0,\quad \iint_{\T^d\times\R^d}vM_0^{\frac{n-2}{2n}}f \,dxdv=0.
		$$
	\end{enumerate}	  
\end{proposition}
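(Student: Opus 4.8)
The plan is to run the Picard-type iteration \eqref{approximate}, pass to a limit in a low norm while retaining the high-norm bound, and then read off the three structural properties by working in the $F$-variable. Fix the constants $\eta$ and $T_*$ provided by Lemma \ref{inductive}, so that $E(f_0)\le\eta/2$ forces $E(f^k)(t)\le\eta$ for every $k$ and every $t\in[0,T_*]$; since each $f^{k+1}$ solves a \emph{linear} transport equation whose right-hand side is built from $f^k$, the whole sequence lies in $C([0,T_*];H^N_{x,v})$ and is uniformly bounded there. Throughout, a time derivative occurring in the multi-index $\alpha$ is traded for spatial derivatives by differentiating the equation itself, so that $E(f_0)\ll1$ is equivalent to smallness of $f_0$ in the spatial Sobolev norm together with the natural compatibility conditions at $t=0$.

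Next, I would show that $\{f^k\}$ is Cauchy in $C([0,T_*];L^2_{x,v})$. Subtracting two consecutive copies of \eqref{approximate}, the difference $g^{k+1}:=f^{k+1}-f^k$ satisfies
\[
\pa_t g^{k+1}+v\cdot\nabla_x g^{k+1}=P(g^k)-g^{k+1}+\big(\Gamma(f^k)-\Gamma(f^{k-1})\big).
\]
Pairing with $g^{k+1}$ in $L^2_{x,v}$, using that $P$ is the orthogonal projection onto $\mathcal N$ so that $\lal P(g^k),g^{k+1}\ral_{L^2_v}\le\|g^k\|_{L^2_v}\|g^{k+1}\|_{L^2_v}$, and Lemma \ref{nonlinear2} together with the Cauchy--Schwarz inequality in $x$ for the nonlinear difference, one obtains
\[
\frac12\frac{d}{dt}\|g^{k+1}\|_{L^2_{x,v}}^2+\|g^{k+1}\|_{L^2_{x,v}}^2\le C\|g^k\|_{L^2_{x,v}}\|g^{k+1}\|_{L^2_{x,v}},
\]
hence, after absorbing $\e\|g^{k+1}\|^2$ and applying Gr\"onwall on $[0,T_*]$, $\sup_{[0,T_*]}\|g^{k+1}\|_{L^2_{x,v}}^2\le CT_*\,\sup_{[0,T_*]}\|g^{k}\|_{L^2_{x,v}}^2$. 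Shrinking $T_*$ so that $CT_*\le\frac12$ makes this a contraction, so $f^k\to f$ in $C([0,T_*];L^2_{x,v})$; interpolating with the uniform $H^N$-bound promotes the convergence to $C([0,T_*];H^s_{x,v})$ for every $s<N$, which (since $N\ge3$) is enough to pass to the limit in each term of \eqref{approximate} and identify $f$ as a solution of \eqref{linearized Cauchy} with $f(\cdot,\cdot,0)=f_0$. Weak lower semicontinuity of the $H^N$-norm gives $E(f)(t)\le\eta$, and the same difference estimate, applied to two solutions sharing this bound, gives uniqueness.

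It remains to recover the listed properties. \emph{Non-negativity.} Multiplying \eqref{approximate} by $M_0^{\frac{n-2}{2n}}$ and using that $P(f^k)+\Gamma(f^k)=M_0^{-\frac{n-2}{2n}}\{M[F^k]-M_0\}$ --- which is precisely what \eqref{linearized}--\eqref{Gamma} and the definition of $P$ say, with $F^k:=M_0+M_0^{\frac{n-2}{2n}}f^k$ --- the iteration becomes $\pa_t F^{k+1}+v\cdot\nabla_x F^{k+1}=M[F^k]-F^{k+1}$, whose Duhamel form along characteristics,
\[
F^{k+1}(x,v,t)=e^{-t}F_0(x-vt,v)+\int_0^t e^{-(t-s)}M\big[F^k\big](x-v(t-s),v,s)\,ds,
\]
is non-negative as soon as $F_0\ge0$, because the smallness of $E(f^k)$ keeps $\rho_{F^k}>0$ and hence $M[F^k]\ge0$; induction on $k$ and passage to the limit yield $F=M_0+M_0^{\frac{n-2}{2n}}f\ge0$. \emph{Conservation.} Integrating the $F^{k+1}$-equation against $1$ and against $v$ over $\T^d\times\R^d$ and invoking the moment identities \eqref{moment_comp} for $M[F^k]$, the quantities $a^{k}(t):=\inttr M_0^{\frac{n-2}{2n}}f^k\,dxdv$ and $b^{k}(t):=\inttr vM_0^{\frac{n-2}{2n}}f^k\,dxdv$ solve the closed linear ODEs $\dot a^{k+1}=a^k-a^{k+1}$ and $\dot b^{k+1}=b^k-b^{k+1}$ with zero initial value; since $a^0=b^0=0$ by the hypothesis on $f_0$, induction forces $a^k\equiv b^k\equiv0$ for all $k$, and the limit inherits the two identities in (3).

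The part I expect to be the main obstacle is the \emph{continuity} of $t\mapsto E(f)(t)$, since the contraction only lives in $L^2_{x,v}$ while the uniform bound only yields weak continuity in $H^N_{x,v}$. The plan is to derive the differential energy identity for $f$ --- apply $\pa^\alpha$ to \eqref{linearized Cauchy}, pair with $\pa^\alpha f$, sum over $|\alpha|\le N$, using Proposition \ref{linear} for the linear part and Lemma \ref{nonlinear} for $\Gamma$ --- which exhibits $E(f)(t)$ as an absolutely continuous function up to a remainder that is continuous in $t$ by the already-established $H^s$-continuity ($s<N$); combined with weak lower semicontinuity of $\|f(\cdot)\|_{H^N}$ this upgrades weak continuity to norm continuity, so that $f\in C([0,T_*];H^N_{x,v})$, the map $t\mapsto E(f)(t)$ is continuous, and $\sup_{[0,T_*]}E(f)(t)\le\eta$. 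An alternative is to run the whole iteration on mollified initial data and pass to the limit, which is the route customary in the nonlinear energy method of \cite{Guo02,Guo03,Guo04}.
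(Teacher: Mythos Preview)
Your proposal is correct and follows essentially the same route as the paper: the iteration \eqref{approximate} with the uniform bound from Lemma \ref{inductive}, a Cauchy estimate in $L^\infty_tL^2_{x,v}$ via the difference equation and Lemma \ref{nonlinear2}, positivity through the $F$-form of the iteration, and conservation from the moment identities. The only notable deviations are cosmetic: you establish conservation by induction on the iterates whereas the paper invokes \eqref{conservation laws} directly for the limit $f$, and for the continuity of $t\mapsto E(f)(t)$ you set up a weak--strong argument while the paper simply integrates the differential inequality $\frac{d}{dt}E(f)\le C(E(f)+E(f)^{3/2})$ over $[s,t]$ and reads off $|E(f)(t)-E(f)(s)|\le C\int_s^t\big(E(f)+E(f)^{3/2}\big)\,d\tau$.
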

\begin{proof}
Observe from \eqref{approximate} that
$$
\frac{d}{dt}\{f^{k+1}-f^k\}+v\cdot\nabla_x\{f^{k+1}-f^k\}+f^{k+1}-f^k=P(f^k)-P(f^{k-1})+\Gamma(f^k)-\Gamma(f^{k-1})
$$
Multiplying the above by $f^{k+1}-f^k$ and integrating over $(x,v)\in\T^3\times\R^3$, we have
		\begin{align*}
	&\frac 12	\frac{d}{dt} \|f^{k+1}-f^{k}\|^2_{L^2_{x,v}}+\|f^{k+1}-f^{k}\|^2_{L^2_{x,v}}\cr
	&=\left\lal P(f^k-f^{k-1}),  f^{k+1}-f^k\right\ral_{L^2_{x,v}}+\left\lal  \Gamma (f^k)- \Gamma (f^{k-1}), f^{k+1}-f^k\right\ral_{L^2_{x,v}}\cr
	&=:I_1+I_2.
		\end{align*}
	In the same manner as \eqref{I1}, one finds
\begin{align*}
I_1\le C_\e\|f^{k}-f^{k-1}\|^2_{L^2_{x,v}}+\e\|f^{k+1}-f^k\|^2_{L^2_{x,v}}
\end{align*}
and applying Lemma \ref{nonlinear2} gives
\begin{align*}
I_2\le  C\eta^{\frac 12}\left( \|f^{k}-f^{k-1}\|^2_{L^2_{x,v}}+\|f^{k+1}-f^k\|_{L^2_{x,v}}^2\right).
\end{align*}
Therefore, for sufficiently small $\eta$ and $\e$ we get
	\begin{align*}
\frac{d}{dt} \|f^{k+1}-f^{k}\|^2_{L^2_{x,v}}+C_{\e,\eta}\|f^{k+1}-f^{k}\|^2_{L^2_{x,v}}\le C \|f^{k}-f^{k-1}\|^2_{L^2_{x,v}}
\end{align*}
which implies that $\{f^k\}$ is Cauchy in $L^\infty(0,T_*; L^2(\T^3\times\R^3))$. Therefore we obtain the local-in-time existence of a unique solution $f$ and the uniform bound of the energy functional due to Lemma \ref{inductive}. To show the continuity of $E(f)(t)$, we observe from \eqref{E estimate}  that
\begin{align*}
\frac{d}{dt}E(f)(t)\le C\left( E(f)(t)+ \{E(f)(t)\}^{\frac 32}\right).
\end{align*}
Integrating over $[s,t]$, one finds
$$
|E(f)(t)-E(f)(s)|\le C\int_s^t  E(f)(\tau)+ \{E(f)(\tau)\}^{\frac 32} \,d \tau
$$
which combined with Lemma \ref{inductive} gives the desired result. The positivity of $F$ can be obtained by the iteration \eqref{approximate} with the initial assumption $F_0\ge 0$.  Finally due to the smoothness of the solution $f$, it follows from \eqref{conservation laws} and the initial condition that 
	$$
\iint_{\T^d\times\R^d}M_0^{\frac{n-2}{2n}}f \,dxdv=\iint_{\T^d\times\R^d}M_0^{\frac{n-2}{2n}}f_0 \,dxdv=0,
$$
and
$$
 \iint_{\T^d\times\R^d}vM_0^{\frac{n-2}{2n}}f \,dxdv= \iint_{\T^d\times\R^d}vM_0^{\frac{n-2}{2n}}f_0 \,dxdv=0,
$$
 which completes the proof.
\end{proof}

\section{Proof of Theorem \ref{main_thm}: global existence and large-time behavior}
\subsection{Macro-micro decomposition}
The aim of this subsection is to study the estimate of $Pf$ in terms of $\{I-P\}f$, which combined with the coercive property of the linear operator $L$ (see Proposition \ref{linear}) enables us to prove the global existence and large-time behavior of solutions.  Recall that the projection operator $P(f)$ takes the form of
$$
P(f)=\left\{a(x,t)+v\cdot b(x,t)  \right\}M_0^{\frac{n-2}{2n}}
$$
where 
$$
a(x,t)=n\gamma \intr fM_0^{\frac{n-2}{2n}}\,dv,\quad b(x,t)=n \intr v fM_0^{\frac{n-2}{2n}}\,dv.
$$
Substituting $f=P(f)+\{I-P\}f$ into \eqref{linearized Cauchy}, we get
\begin{align*}
\{\partial_t+v\cdot\nabla_x \}P(f)&=\{-\partial_t -v\cdot\nabla_x+L \}\{I-P \}(f)+\Gamma(f)\cr
&=: \ell\{I-P\}(f)+h(f),
\end{align*}  
and a direct calculation gives
\begin{align}\label{mM}
\left\{ \pa_t a+\sum_{i=1}^dv_i(\pa_{x_i}a+\pa_t b)+\sum_{i=1}^dv_i\pa_{x_i}\sum_{j=1}^d v_j b_j \right\}M_0^{\frac{n-2}{2n}}=\ell\{I-P\}(f)+h(f).
\end{align}
Define
\begin{equation}\label{micromacro basis}
\left\{e_{a},e_{a_i},e_{b_{ij}} \right\}:=\left\{ 1, v_i,v_iv_j \right\}M_0^{\frac{n-2}{2n}}.
\end{equation}
Using this, then \eqref{mM} can be written as 
\begin{enumerate}
	\item $\pa_ta=\ell_{a}+h_{a}$,
	\item $\pa_{x_{i}}a+\partial_{t}b_{i}=\ell_{a_i}+h_{a_i}$,
	\item $\partial_{x_{i}}b_{j}+\partial_{x_{j}}b_{i}=\ell_{b_{ij}}+h_{b_{ij}}$
\end{enumerate}
where $\delta_{ij}$ is the Kronecker delta, and $\ell_{(\cdot)}$ and $h_{(\cdot)}$ denote the inner product of $\ell\{I-P\}(f)$ and $h(f)$ with the corresponding basis \eqref{micromacro basis}. This is often called the macro-micro decomposition. Now we follow the standard argument introduced in \cite{Guo02,Guo03,Guo04} to obtain the full coercivity of $L$. The proof will be similar to that of \cite{Guo02,Guo03,Guo04} but for the completeness of the paper, we prove it in detail.
\begin{lemma}\label{ab estimates}
Let $f$ be a local solution constructed in Proposition \ref{local}. We then have
	\begin{align*}
\sum_{|\alpha|\le N} \big( \|\pa^\alpha a\|^2_{L^2_x}+\| \pa^\alpha b\|^2_{L^2_x}\big) &\le 	C\sum_{|\alpha|\le N-1}\biggl(\|\pa^\alpha\ell_a\|^2_{L^2_x}+\sum_{i=1}^d\|\pa^\alpha\ell_{a_i}\|^2_{L^2_x}+\sum_{i,j=1}^d\|\pa^\alpha\ell_{b_{ij}}\|^2_{L^2_x}\biggl) 
\cr
& +C	\sum_{|\alpha|\le N-1}\biggl(\|\pa^\alpha h_a\|^2_{L^2_x}+\sum_{i=1}^d\|\pa^\alpha h_{a_i}\|^2_{L^2_x}+\sum_{i,j=1}^d\|\pa^\alpha h_{b_{ij}}\|^2_{L^2_x}\biggl)
\end{align*}
 
	\end{lemma}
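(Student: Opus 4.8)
The plan is to run the classical macroscopic-equations argument of Guo \cite{Guo02,Guo03,Guo04}: the identities (1)--(3) above let one trade a derivative of $a$ or $b$ for micro/nonlinear data of one order lower, and this is combined with elliptic regularity on $\T^d$ and the Poincaré inequality. Two structural facts are used. First, $a$ and $b$ have zero spatial mean, and in fact $\int_{\T^d}\pa^\alpha a\,dx = \int_{\T^d}\pa^\alpha b\,dx = 0$ for every multi-index $\alpha$: when $\alpha$ carries a spatial derivative this is the integral of a perfect derivative over the torus, and when $\alpha = \pa_t^k$ it follows by differentiating in $t$ the relations $\int_{\T^d}a\,dx = \int_{\T^d}b\,dx = 0$, which are Proposition \ref{local}(3) rephrased through the definitions of $a$ and $b$. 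Second, the periodic elliptic estimate: if $\Delta_x u = \nabla_x\cdot G + H$ on $\T^d$ with $\int_{\T^d}u\,dx = \int_{\T^d}H\,dx = 0$, then $\|u\|_{L^2_x}\le C\|\nabla_x u\|_{L^2_x}\le C(\|G\|_{L^2_x} + \|H\|_{L^2_x})$; this is immediate from Parseval's identity since the nonzero Fourier modes on $\T^d$ are bounded away from the origin.

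I would first record three consequences of (1)--(3). Writing $\Delta_x b_j = \sum_i\pa_{x_i}(\pa_{x_i}b_j + \pa_{x_j}b_i) - \pa_{x_j}(\nabla_x\cdot b)$ and substituting (3), together with its diagonal $\nabla_x\cdot b = \tfrac12\sum_i(\ell_{b_{ii}} + h_{b_{ii}})$, puts the right-hand side in divergence form $\nabla_x\cdot G$ with $G$ built from the $\ell_{b_{ij}},h_{b_{ij}}$; hence $\|\pa^\alpha\nabla_x b\|_{L^2_x}\le C\sum_{i,j}(\|\pa^\alpha\ell_{b_{ij}}\|_{L^2_x} + \|\pa^\alpha h_{b_{ij}}\|_{L^2_x})$. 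Differentiating (2) in $t$ and substituting (1) gives $\pa_t^2 b_i = \pa_t(\ell_{a_i}+h_{a_i}) - \pa_{x_i}(\ell_a+h_a)$. Applying $\nabla_x\cdot$ to (2) and inserting the diagonal of (3) gives $\Delta_x a = \sum_i\pa_{x_i}(\ell_{a_i}+h_{a_i}) - \tfrac12\pa_t\sum_i(\ell_{b_{ii}}+h_{b_{ii}})$, so the elliptic estimate yields $\|\pa^\alpha\nabla_x a\|_{L^2_x}\le C\sum_i(\|\pa^\alpha\ell_{a_i}\|_{L^2_x} + \|\pa^\alpha h_{a_i}\|_{L^2_x} + \|\pa_t\pa^\alpha\ell_{b_{ii}}\|_{L^2_x} + \|\pa_t\pa^\alpha h_{b_{ii}}\|_{L^2_x})$.

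With these at hand I would dispose of $\pa^\alpha b$ and $\pa^\alpha a$, $|\alpha|\le N$, case by case. For $b$: if $\alpha$ has at least two time derivatives, strip two $\pa_t$'s and use the $\pa_t^2 b_i$ identity; if $\alpha$ has at least one spatial derivative and at most one time derivative, strip one spatial derivative and use the bound for $\nabla_x b$; if $\alpha = 0$, use Poincaré; in the single remaining case $\alpha = \pa_t$, estimate $\pa_t b_i = \ell_{a_i}+h_{a_i} - \pa_{x_i}a$ by the bound for $\nabla_x a$. For $a$: if $\alpha$ has a $\pa_t$, use (1), so $\pa^\alpha a = \pa^{\alpha'}(\ell_a+h_a)$ with $|\alpha'| = |\alpha|-1$; if $\alpha$ is purely spatial of positive order, strip one $\pa_{x_i}$ and use (2), so $\pa^\alpha a = \pa^{\alpha-\mathbf{e}_i}(\ell_{a_i}+h_{a_i}) - \pa_t\pa^{\alpha-\mathbf{e}_i}b_i$, whose last term is a derivative of $b$ containing exactly one $\pa_t$ and so already treated; if $\alpha = 0$, use Poincaré and the bound for $\nabla_x a$. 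In every instance the right-hand side involves only $\pa^\beta\ell_{(\cdot)},\pa^\beta h_{(\cdot)}$ with $|\beta|\le N-1$ — the largest orders that arise are $|\alpha|-1$ and, through the extra $\pa_t$ in the elliptic identity for $a$, order $1$, both admissible since $N\ge 3$. Squaring and summing over $|\alpha|\le N$ gives the assertion.

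The crux is the lowest-order pair $\pa_{x_i}a$ and $\pa_t b_i$: identity (2) controls only their sum, so neither is individually accessible from (1)--(3), and the tie is broken only through the elliptic identity for $a$ obtained by combining the divergence of (2) with the diagonal of (3). This is also the sole point where the zero-mean property — hence the conservation laws \eqref{conservation laws} and Proposition \ref{local}(3) — and the periodicity of the spatial domain are genuinely needed; the rest is routine multi-index bookkeeping as in \cite{Guo02,Guo03,Guo04}.
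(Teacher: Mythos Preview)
Your argument is correct and follows essentially the same route as the paper: both use the identities (1)--(3) to express $\Delta_x b_i$ and $\Delta_x a$ in terms of the micro/nonlinear data, invoke the periodic elliptic/Poincar\'e estimates (using the zero-mean of $a,b$ from Proposition~\ref{local}(3)), and handle temporal derivatives directly via (1) and (2). The only cosmetic differences are that you package $\pa_t^2 b_i=\pa_t(\ell_{a_i}+h_{a_i})-\pa_{x_i}(\ell_a+h_a)$ as a standalone identity and substitute (3) into the elliptic equation for $a$ before estimating, whereas the paper keeps $\pa_t\pa^\alpha b$ as an intermediate term and bounds it afterwards via \eqref{b spatial}; the index bookkeeping and final bounds coincide.
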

\begin{proof}
 Observe that 
	\begin{align*}
	\Delta_x \partial^{\alpha}b_{i}&=\sum_{j=i}^d\pa_{x_j}^2 \pa^\alpha b_i \cr
	&=\sum_{j=1}^d\{-\partial_{x_j}\pa_{x_i}\partial^{\alpha}b_j+\partial_{x_j}\partial^{\alpha}(\ell_{b_{ij}}+h_{b_{ij}})\}\cr
		&=\sum_{j=1 }^d\{-\pa_{x_i}\partial^{\alpha}(\ell_{b_{jj}}+h_{b_{jj}})+\partial_{x_j}\partial^{\alpha}(\ell_{b_{ij}}+h_{b_{ij}})\} 
	\end{align*}
for $|\alpha|\le N-1$, where we used
	$$
 \partial_{x_{i}}b_{j}+\partial_{x_{j}}b_{i}=\ell_{b_{ij}}+h_{b_{ij}}.
$$
Multiplying it by $\pa^\alpha b_i$ and integrating over $x\in \T^d$, we get
	\begin{align*}
	\|\nabla_x\partial^{\alpha}b_{i}\|^{2}_{L^2_x}&\le C_{\varepsilon}\sum_{j=1}^d\big(\|\partial^{\alpha}\ell_{b_{ij}}\|_{L^2_x}^{2}+\|\partial^{\alpha}h_{b_{ij}}\|_{L^2_x}^{2}\big)+\varepsilon\sum_{j=1}^d\|\partial_{x_j}\partial^{\alpha}b_{i}\|^{2}_{L^2_x}
	\end{align*}
and thus in the presence of spatial derivatives, we obtain
	\begin{equation}\label{b spatial}
	\|\nabla_x\partial^{\alpha}b\|^{2}_{L^2_x}\le C\sum_{i,j=1}^d\big(\|\partial^{\alpha}\ell_{b_{ij}}\|^{2}_{L^2_x}+\|\partial^{\alpha}h_{b_{ij}}\|^{2}_{L^2_x}\big).
	\end{equation} 
The purely temporal derivative of $b(x,t)$ will be treated later.		For $ a(x,t) $, in the presence of the temporal derivative, using the first relation of the macro-micro decomposition gives
\begin{equation}\label{a temporal}
\|\partial_{t}\partial^{\alpha}a\|^{2}_{L^2_x}\le C\left(\|\partial^{\alpha}\ell_{a}\|^{2}_{L^2_x}+\|\partial^{\alpha}h_{a}\|^{2}_{L^2_x}\right).
\end{equation}
For purely spatial derivatives of $a(x,t)$ with $|\alpha|\neq 0$, we have
\begin{align*}
-\Delta_x \pa^\alpha a=\nabla_x\cdot\pa_t\pa^\alpha b-\sum_{i=1}^d\pa_{x_i}\pa^\alpha\{\ell_{a_i}+h_{a_i}\},
\end{align*}
which, combined with \eqref{b spatial}, leads to
\begin{align}\begin{split} \label{a spatial}
\|\nabla_x\partial^{\alpha}a\|^{2}&\le C  \|\pa_t \pa^\alpha b\|_{L^2_x}^2+ C\sum_{i=1}^d\big(\|\partial^{\alpha}\ell_{a_i}\|_{L^2_x}^{2}+\|\partial^{\alpha}h_{a_i}\|_{L^2_x}^{2}\big) \cr
&\le C  \sum_{i,j=1}^d\big(\|\pa_t\partial^{\alpha^\prime}\ell_{b_{ij}}\|^{2}_{L^2_x}+\pa_t\partial^{\alpha^\prime}h_{b_{ij}}\|^{2}_{L^2_x}+\|\partial^{\alpha}\ell_{a_i}\|_{L^2_x}^{2}+\|\partial^{\alpha}h_{a_i}\|_{L^2_x}^{2}\big) 
\end{split} \end{align} 
where $\pa^{\alpha^\prime}$ is the spatial derivative with $|\alpha^\prime|=|\alpha|-1$. For the case $|\alpha|=0$, we use the first relation of Proposition \ref{local} (3) and Poincar\'{e} inequality  to get
\begin{align}\label{a 0}
\| a\|^2_{L^2_x}&\le C\|\nabla_x a\|^2_{L^2_x}\le  C \|\pa_t   b\|_{L^2_x}^2+ C\sum_{i=1}^d\big(\| \ell_{a_i}\|_{L^2_x}^{2}+\| h_{a_i}\|_{L^2_x}^{2}\big). 
\end{align}
On the other hand, using the second relation of Proposition \ref{local} (3), Poincar\'{e} inequality and \eqref{b spatial} gives
$$
\| \pa_t b\|^2_{L^2_x}\le C\|\nabla_x \pa_t b\|^2_{L^2_x}\le C\sum_{i,j=1}^d\big(\|\partial_t\ell_{b_{ij}}\|^{2}_{L^2_x}+\|\partial_th_{b_{ij}}\|^{2}_{L^2_x}\big)
$$
which combined with \eqref{a 0} gives
\begin{align}\label{a}
\| a\|^2_{L^2_x}\le C\sum_{i,j=1}^d\big(\|\partial_t\ell_{b_{ij}}\|^{2}_{L^2_x}+\|\partial_th_{b_{ij}}\|^{2}_{L^2_x}+\|\ell_{a_i}\|_{L^2_x}^{2}+\| h_{a_i}\|_{L^2_x}^{2}\big).
\end{align}
Finally for the purely temporal derivative $\pa^\alpha=\pa_t^{\alpha_0}$, it follows from the first and second relations of the macro-micro decomposition that
\begin{align}\label{b temporal}\begin{split}
\| \pa_t\pa^\alpha b\|^2_{L^2_x}&\le C\| \nabla_x\pa^\alpha a\|^2_{L^2_x}+C\sum_{i=1}^d \big( \|\pa^\alpha\ell_{a_i}\|_{L^2_x}^{2}+\| \pa^\alpha h_{a_i}\|_{L^2_x}^{2}\big) \cr
&\le C\left(\|\nabla_x\partial_t^{\alpha_0-1}\ell_{a}\|^{2}_{L^2_x}+\|\nabla_x\partial_t^{\alpha_0-1}h_{a}\|^{2}_{L^2_x}\right)+C\sum_{i=1}^d \big( \|\pa^\alpha\ell_{a_i}\|_{L^2_x}^{2}+\| \pa^\alpha h_{a_i}\|_{L^2_x}^{2}\big) .
\end{split}\end{align}
Combining  \eqref{b spatial}, \eqref{a temporal}, \eqref{a spatial}, \eqref{a} and \eqref{b temporal} gives the desired result.
\end{proof}

\begin{lemma}\label{ell h} Let $f$ be a local solution constructed in Proposition \ref{local}.	We then have
\begin{enumerate}
	\item $\displaystyle \sum_{|\alpha|\le N-1}\big(\|\pa^\alpha\ell_a\|_{L^2_x}+\sum_{i=1}^d\|\pa^\alpha\ell_{a_i}\|_{L^2_x}+\sum_{i,j=1}^d\|\pa^\alpha\ell_{b_{ij}}\|_{L^2_x}\big)\le C\sum_{|\alpha|\le N}\|\{I-P\}\pa^\alpha f\|_{L^2_{x,v}},$
	\item $\displaystyle \sum_{|\alpha|\le N-1}\big(\|\pa^\alpha h_a\|_{L^2_x}+\sum_{i=1}^d\|\pa^\alpha h_{a_i}\|_{L^2_x}+\sum_{i,j=1}^d\|\pa^\alpha h_{b_{ij}}\|_{L^2_x}\big)\le C\sqrt{E(f) }\sum_{|\alpha|\le N} \|\pa^\alpha f\|_{L^2_{x,v}}$
\end{enumerate}

\end{lemma}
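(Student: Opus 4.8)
The plan is to unwind the definitions of the coefficients $\ell_{(\cdot)}$, $h_{(\cdot)}$ and reduce both estimates to a Cauchy–Schwarz pairing in $v$ against the fixed, bounded, compactly supported test functions $e_a,e_{a_i},e_{b_{ij}}\in\{1,v_i,v_iv_j\}M_0^{\frac{n-2}{2n}}$ (these lie in $L^2_v$, and so do their products with $v_k$, because $M_0^{\frac{n-2}{2n}}$ is bounded and supported in the compact set $\Omega$; this is what makes all the pairings well-defined elements of $L^2_x$).

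For (1), I would first use $L=P-I$ and the fact that $P$ is the orthogonal projection onto $\mathcal N$ to write $L\{I-P\}(f)=-\{I-P\}(f)$, so that
$$
\ell\{I-P\}(f)=-\partial_t\{I-P\}(f)-v\cdot\nabla_x\{I-P\}(f)-\{I-P\}(f).
$$
Since $\partial_t$, $v\cdot\nabla_x$ and $P$ commute with every $\pa^\alpha$ (which differentiates only in $(x,t)$), for $|\alpha|\le N-1$ one has $\pa^\alpha\ell_{(\cdot)}=-\big\langle \partial_t\{I-P\}\pa^\alpha f+v\cdot\nabla_x\{I-P\}\pa^\alpha f+\{I-P\}\pa^\alpha f,\ e_{(\cdot)}\big\rangle_{L^2_v}$, and Cauchy–Schwarz in $v$ gives
$$
|\pa^\alpha\ell_{(\cdot)}|\le C\Big(\|\{I-P\}\partial_t\pa^\alpha f\|_{L^2_v}+\textstyle\sum_{k}\|\{I-P\}\partial_{x_k}\pa^\alpha f\|_{L^2_v}+\|\{I-P\}\pa^\alpha f\|_{L^2_v}\Big).
$$
The derivatives $\partial_t\pa^\alpha$ and $\partial_{x_k}\pa^\alpha$ have total order $|\alpha|+1\le N$, so taking $L^2_x$ norms and summing over the finitely many basis elements and over $|\alpha|\le N-1$ yields (1).

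For (2), note $h(f)=\Gamma(f)$, hence $\pa^\alpha h_{(\cdot)}=\langle\pa^\alpha\Gamma(f),e_{(\cdot)}\rangle_{L^2_v}$, and I would apply Lemma~\ref{nonlinear}~(2) with the fixed velocity function $e_{(\cdot)}$ in the role of $h$ to get $|\pa^\alpha h_{(\cdot)}|\le C\sum_{|\beta|\le|\alpha|}\|\pa^\beta f\|_{L^2_v}\|\pa^{\alpha-\beta}f\|_{L^2_v}$. For $|\alpha|\le N-1$ and $N\ge3$, in each product the lower-order factor has order at most $\lfloor(N-1)/2\rfloor\le N-2$; applying $H^2(\T^d)\subseteq L^\infty(\T^d)$ to that factor costs at most two extra $x$-derivatives and so stays within order $N$, giving $\big\|\,\|\pa^\beta f\|_{L^2_v}\,\big\|_{L^\infty_x}\le C\sqrt{E(f)}$ for the small factor and $\big\|\,\|\pa^{\alpha-\beta}f\|_{L^2_v}\,\big\|_{L^2_x}\le\sum_{|\gamma|\le N}\|\pa^\gamma f\|_{L^2_{x,v}}$ for the other. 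Taking $L^2_x$ norms and summing over basis elements and $|\alpha|\le N-1$ produces (2).

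There is no real obstacle here; the point to watch is the derivative bookkeeping. The hidden $\partial_t$ and $v\cdot\nabla_x$ in $\ell$ raise the order by one, which is exactly why the left-hand sums stop at $N-1$ while the right-hand ones reach $N$; and in (2) the splitting that lets the Sobolev embedding absorb one factor into $\sqrt{E(f)}$ without exceeding order $N$ is precisely the constraint that forces $N\ge 3$.
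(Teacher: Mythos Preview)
Your proposal is correct and follows essentially the same route as the paper: for (1) you reduce $L\{I-P\}f$ to $-\{I-P\}f$, commute $\pa^\alpha$ with $P$, and apply Cauchy--Schwarz in $v$ against the fixed basis elements, exactly as the paper does via Proposition~\ref{linear}(1) and H\"older; for (2) you invoke Lemma~\ref{nonlinear}(2) with $h=e_{(\cdot)}$ and then the Sobolev embedding on the lower-order factor, again matching the paper's argument. Your additional remarks on the derivative bookkeeping (the shift from $N-1$ to $N$ and the role of $N\ge 3$) make explicit what the paper leaves implicit.
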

\begin{proof}
 By definition,	$\pa^\alpha\ell_{(\cdot)}$ can be written in the form of
	\begin{align*}
\lal \pa^\alpha\ell\{I-P\}(f) , e_{(\cdot)}\ral_{L^2_v}&=\intr \{-\partial_t -v\cdot\nabla_x+L \}\{I-P \}(\pa^\alpha f) e_{(\cdot)}\,dv\cr
&=\intr \{-\{I-P \}(\partial_t\pa^\alpha f) -v\cdot\{I-P \}(\nabla_x \pa^\alpha f)-\{I-P \}(\pa^\alpha f) \} e_{(\cdot)}\,dv
	\end{align*}
where we used Proposition \ref{linear} (1). Applying H\"{o}lder's inequality, we get 
	\begin{align*}
	\left\|\lal \pa^\alpha\ell\{I-P\}(f) , e_{(\cdot)}\ral_{L^2_v} \right\|_{L^2_x}&\le 	C\left(\left\|\{I-P\}(\pa_t\pa^\alpha f) \right\|_{L^2_{x,v}}+\left\|\{I-P\}(\nabla_x\pa^\alpha f) \right\|_{L^2_{x,v}}+\left\|\{I-P\}(\pa^\alpha f) \right\|_{L^2_{x,v}}\right).
	\end{align*}
Similarly, $\pa^\alpha h_{(\cdot)}$ can be  written as
$$
\pa^\alpha h_{(\cdot)}=\lal \pa^\alpha \Gamma(f) , e_{(\cdot)}\ral_{L^2_v}.
$$
Applying Lemma \ref{nonlinear} (2) and $H^2(\T^3)\subseteq L^\infty(\T^3)$ to the lower order derivative, we obtain
	\begin{align*}
\left\|\pa^\alpha h_{(\cdot)}\right\|_{L^2_x}\le C\sum_{|\beta|\le |\alpha|}\left\| \|\pa^\beta f\|_{L^2_v}\|\pa^{\alpha-\beta}f\|_{L^2_v} \right\|_{L^2_x} &\le C\sqrt{E(f)}\sum_{|\alpha|\le N} \|\pa^\alpha f\|_{L^2_{x,v}}
\end{align*}
which completes the proof.
\end{proof}
\begin{theorem}\label{L coer}
Let $f$ be a local solution constructed in Proposition \ref{local}.	Then there exists a positive constant $\delta$  such that if
	$	E(f)(t)\le \eta$, 	then
	$$
	\sum_{|\alpha|\le N} \lal L(\pa^\alpha f),\pa^\alpha f\ral_{L^2_{x,v}}\le -\delta  \sum_{ |\alpha|\le N} \|\pa^\alpha f\|^2_{L^2_{x,v}}.
	$$
\end{theorem}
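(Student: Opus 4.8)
The plan is to assemble the coercivity from the pointwise-in-$v$ identity of Proposition~\ref{linear}(2) together with the macro-micro estimates of Lemmas~\ref{ab estimates} and~\ref{ell h}, absorbing the hydrodynamic part of $f$ into the microscopic part $\{I-P\}f$ by means of the smallness $E(f)\le\eta$. Since the basis functions $e_i$ depend only on $v$, the projection $P$ commutes with every $\partial^\alpha$, so $\{I-P\}\partial^\alpha f=\partial^\alpha f-P\partial^\alpha f$ and, by orthogonality of $P$ in $L^2_v$,
\begin{equation*}
\|\partial^\alpha f\|^2_{L^2_{x,v}}=\|P\partial^\alpha f\|^2_{L^2_{x,v}}+\|\{I-P\}\partial^\alpha f\|^2_{L^2_{x,v}};
\end{equation*}
moreover, writing $P\partial^\alpha f=\big(\partial^\alpha a+v\cdot\partial^\alpha b\big)M_0^{\frac{n-2}{2n}}$ and using Lemma~\ref{orthonormal}, one gets $\|P\partial^\alpha f\|^2_{L^2_{x,v}}=\frac{1}{n\gamma}\|\partial^\alpha a\|^2_{L^2_x}+\frac1n\|\partial^\alpha b\|^2_{L^2_x}$, which is comparable to $\|\partial^\alpha a\|^2_{L^2_x}+\|\partial^\alpha b\|^2_{L^2_x}$. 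Applying Proposition~\ref{linear}(2) to $\partial^\alpha f$ and summing over $|\alpha|\le N$ gives
\begin{equation*}
\sum_{|\alpha|\le N}\lal L(\partial^\alpha f),\partial^\alpha f\ral_{L^2_{x,v}}=-\sum_{|\alpha|\le N}\|\{I-P\}\partial^\alpha f\|^2_{L^2_{x,v}},
\end{equation*}
so it suffices to prove $\sum_{|\alpha|\le N}\|\partial^\alpha f\|^2_{L^2_{x,v}}\le C\sum_{|\alpha|\le N}\|\{I-P\}\partial^\alpha f\|^2_{L^2_{x,v}}$, which by the identity above reduces to bounding $\sum_{|\alpha|\le N}\big(\|\partial^\alpha a\|^2_{L^2_x}+\|\partial^\alpha b\|^2_{L^2_x}\big)$.

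For that I would feed this quantity into Lemma~\ref{ab estimates}, which controls it by $\sum_{|\alpha|\le N-1}$ of the squared $\ell_{(\cdot)}$- and $h_{(\cdot)}$-terms, and then invoke Lemma~\ref{ell h}. Squaring part (1) (using Cauchy--Schwarz to pass from a finite sum of norms to a sum of squared norms) yields $\sum_{|\alpha|\le N-1}(\ell\text{-terms})^2\le C\sum_{|\alpha|\le N}\|\{I-P\}\partial^\alpha f\|^2_{L^2_{x,v}}$, while squaring part (2) and using $E(f)\le\eta$ yields $\sum_{|\alpha|\le N-1}(h\text{-terms})^2\le C\,E(f)\sum_{|\alpha|\le N}\|\partial^\alpha f\|^2_{L^2_{x,v}}\le C\eta\sum_{|\alpha|\le N}\|\partial^\alpha f\|^2_{L^2_{x,v}}$. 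Combining these, adding $\sum_{|\alpha|\le N}\|\{I-P\}\partial^\alpha f\|^2_{L^2_{x,v}}$ to both sides and using the Pythagorean identity,
\begin{equation*}
\sum_{|\alpha|\le N}\|\partial^\alpha f\|^2_{L^2_{x,v}}\le C\sum_{|\alpha|\le N}\|\{I-P\}\partial^\alpha f\|^2_{L^2_{x,v}}+C\eta\sum_{|\alpha|\le N}\|\partial^\alpha f\|^2_{L^2_{x,v}};
\end{equation*}
for $\eta$ small enough the last term is absorbed into the left-hand side, which gives the required comparison and hence the stated coercivity for a suitable $\delta>0$.

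The substantive work has already been carried out in Lemmas~\ref{ab estimates} and~\ref{ell h}, so the proof of the theorem is essentially an assembly; the point that matters is the division of labor it reflects. The linear $\ell$-terms must be controlled purely by the microscopic part $\|\{I-P\}\partial^\alpha f\|$ — which is possible precisely because $\ell\{I-P\}(f)$ involves $\{I-P\}f$ and its first derivatives but never $Pf$ — so their contribution lands on the side produced by the coercivity identity, whereas the nonlinear $h$-terms, which can only be bounded by the full norm $\|\partial^\alpha f\|$, come with the gain $\sqrt{E(f)}$ and are therefore absorbable for small data. The only real obstacle is keeping the derivative count consistent at order $N$: Lemma~\ref{ab estimates} trades one derivative on $(a,b)$ for source terms one order lower, and Lemma~\ref{ell h}(1) trades it back since $\ell$ already carries a derivative of $\{I-P\}f$, so everything closes exactly at $|\alpha|\le N$.
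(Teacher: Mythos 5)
Your proposal is correct and follows essentially the same route as the paper: combine Lemma~\ref{ab estimates} with Lemma~\ref{ell h} to bound $\sum_{|\alpha|\le N}\|P\partial^\alpha f\|^2_{L^2_{x,v}}$ by the microscopic part plus a small multiple of the full norm, absorb the latter using $E(f)\le\eta$, and conclude via Proposition~\ref{linear}(2) and the Pythagorean decomposition. The only (harmless) difference is that you make explicit the identification $\|P\partial^\alpha f\|^2_{L^2_{x,v}}=\tfrac{1}{n\gamma}\|\partial^\alpha a\|^2_{L^2_x}+\tfrac1n\|\partial^\alpha b\|^2_{L^2_x}$, which the paper uses implicitly.
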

\begin{proof}
	Combining Lemma \ref{ab estimates} and Lemma \ref{ell h}, we get
	\begin{align*}
\sum_{|\alpha|\le N} \| P(\pa^\alpha f)\|^2_{L^2_{x,v}}&\le C\sum_{|\alpha|\le N}\| \big( \{I-P\}\pa^\alpha f\|^2_{L^2_{x,v}}+ \sqrt{E(f)}\| \pa^\alpha f\|^2_{L^2_{x,v}} \big).
\end{align*}
Splitting the last term into the macroscopic part $P$ and the microscopic part $\{I-P\}$, we obtain
	\begin{align}\label{P I-P}
C_1\sum_{|\alpha|\le N} \| P(\pa^\alpha f)\|^2_{L^2_{x,v}}&\le \sum_{|\alpha|\le N}\|  \{I-P\}\pa^\alpha f\|^2_{L^2_{x,v}} 
\end{align}
for sufficiently small $E(f)$. Thus it follows from Proposition \ref{linear} (2) and \eqref{P I-P} that
		\begin{align*}
	\sum_{|\alpha|\le N} \lal L(\pa^\alpha f),\pa^\alpha f\ral_{L^2_{x,v}}&=-\sum_{|\alpha|\le N}\|\{I-P\}\pa^\alpha f\|^2_{L^2_{x,v}}\cr 
	&\le -\frac 12\sum_{|\alpha|\le N}\|\{I-P\}\pa^\alpha f\|^2_{L^2_{x,v}}-\frac {C_1}{2}\sum_{|\alpha|\le N}\|P(\pa^\alpha f)\|^2_{L^2_{x,v}}\cr 
	&\le -\frac{1}{2}\min \{1,  C_1\} \sum_{ |\alpha|\le N} \| \pa^\alpha f\|^2_{L^2_{x,v}},
\end{align*}
which completes the proof.
\end{proof}
\subsection{Global existence and large-time behavior} In this subsection, we extend the local-in-time existence of solutions established in Proposition \ref{local} to the global one via the standard continuity argument and show the large-time behavior of solutions. For this, we apply $\partial^\alpha$ to \eqref{linearized Cauchy}, multiply by $\partial^{\alpha}f$, integrate over $(x,v)\in\T^3\times\R^3$, and sum over $|\alpha|\le N$ to obtain
\begin{align*} 
\frac{1}{2}\frac{d}{dt}E(f)(t)+\delta E(f)(t)&\le \sum_{|\alpha|\le N}\lal  \pa^\alpha\Gamma(f),\pa^\alpha f\ral_{L^2_{x,v}}\cr
&\le C\sum_{\substack{|\beta|\le |\alpha|\cr |\alpha|\le N}}\int_{\T^d}\|\pa^{\beta}f\|_{L^2_{v}}\|\pa^{\alpha-\beta}f\|_{L^2_{v}}\|\pa^\alpha f\|_{L^2_{v}}\,dx
 \end{align*}
where we used Lemma \ref{nonlinear} and  Theorem \ref{L coer}. For the right-hand side of the last inequality, we apply $H^2(\T^3)\subseteq L^\infty(\T^3)$ to the lower order derivative  to get
\begin{align*} 
\frac{d}{dt}E(f)(t)+\delta E(f)(t)\le C\sqrt{E(f)(t)}E(f)(t).
\end{align*}
Define
$$
M=\min\left\{\frac{\delta^2}{4C^2},\eta\right\}
$$
and choose the initial data sufficiently small in the sense that
$$
E(f_{0})\le \frac M2 < \eta.
$$
Let $T>0$ be given as
$$
T=\sup_{t}\big\{t:E(f)(t)\le M\big\}
$$
which gives
$$
E(f)(t)\le M \le \eta.
$$
 For $0\le t\le T$, we have
\begin{align}\label{asymptotic}
\frac{d}{dt}E(f)+\delta E(f)&\le C \sqrt{E(f)}E(f)\le C \sqrt{M}E(f)\le \frac{\delta}{2} E(f),
\end{align}
yielding
\begin{align*}
E(f)(t)+\frac{\delta}{2}\int_{0}^{t}E(f)(s)ds&\le E(f_{0})\le \frac M2 < M,
\end{align*}
which is a contradiction to the continuity of $E(f)(t)$. Therefore by definition of $T$, we conclude that $T=\infty$ and applying Gr\"{o}nwall's inequality to  \eqref{asymptotic} gives the desired result.

\section*{Acknowledgments}
 B.-H. Hwang was funded by a 2023 Research Grant from Sangmyung University (2023-A000-0284).

%
%
%
%

\end{document}